\documentclass[letterpaper,10pt,reqno]{amsart}
\usepackage{indentfirst} 
\usepackage{amssymb}
\usepackage{mathrsfs}
\usepackage{graphicx} 
\usepackage{amsmath}
\usepackage{amsthm}
\usepackage{amssymb}
\usepackage{amscd}
\usepackage{amsbsy}
\usepackage{epsfig}
\usepackage{graphicx}
\usepackage{psfrag}
\usepackage{bbm}
\usepackage{hyperref}
\usepackage{cleveref}
\usepackage[UKenglish]{babel}
\usepackage[UKenglish]{isodate}
\usepackage{enumerate}

\parindent=0cm
\parskip=3mm

\theoremstyle{plain}
\newtheorem{theorem}{Theorem}[section]
\newtheorem{remark}[theorem]{Remark}
\newtheorem{proposition}[theorem]{Proposition}
\newtheorem{lemma}[theorem]{Lemma}

\newtheorem{definition}[theorem]{Definition}

\newtheorem*{lemma*}{Lemma}


\theoremstyle{definition}

\newtheorem{example}{Example}[section]

\def\R{\ensuremath{\mathbb R}}
\def\N{\ensuremath{\mathbb N}}

\def\ie{{\em i.e.}, }

\def\dim{\ensuremath{\text{dim}}}

\def\eps{\varepsilon}

\numberwithin{equation}{section}

\usepackage{graphicx} 
\usepackage[margin=30mm]{geometry}

\begin{document}

\title{Convergence of the Birkhoff spectrum for nonintegrable observables}

\date{\today}

\begin{thanks}
{G.I.\ was partially supported  by Proyecto Fondecyt 1230100.  MT was partially supported by the FCT (Funda\c c\~ao para a Ci\^encia  e a Tecnologia) project 2022.07167.PTDC. BZ is supported by a grant from Chinese Scholarship Council Programme.}
\end{thanks}

\author[G.~Iommi]{Godofredo Iommi}
\address{Facultad de Matem\'aticas,
Pontificia Universidad Cat\'olica de Chile (UC), Avenida Vicu\~na Mackenna 4860, Santiago, Chile}
\email{\href{godofredo.iommi@gmail.com}{godofredo.iommi@gmail.com}}
\urladdr{\url{http://http://www.mat.uc.cl/~giommi/}}

\author[M.~Todd]{Mike Todd}
\address{Mathematical Institute\\
University of St Andrews\\
North Haugh\\
St Andrews\\
KY16 9SS\\
Scotland} \email{\href{mailto:m.todd@st-andrews.ac.uk}{m.todd@st-andrews.ac.uk}}
\urladdr{\url{https://mtoddm.github.io/}}

\author[B.~Zhao]{Boyuan Zhao}
\address{Mathematical Institute\\
University of St Andrews\\
North Haugh\\
St Andrews\\
KY16 9SS\\
Scotland} \email{\href{mailto:bz29@st-andrews.ac.uk}{bz29@st-andrews.ac.uk}}
\urladdr{\url{https://boyuanzhao.github.io/}}

\begin{abstract}
 We consider interval maps with countably many full branches and observables with polynomial tails. We show that the Birkhoff spectrum is real analytic and that its convergence to the Hausdorff dimension of the repeller is governed by the polynomial tail exponent. This result extends previous work by Arima on more regular observables and demonstrates how the tail behaviour influences the structure of the Birkhoff spectrum. Our proof relies on techniques from thermodynamic formalism and tail estimates for the observable and our applications are to natural observations on Gauss maps, L\"uroth transformations as well as to a the first return time for a class of induced Manneville-Pomeau maps.
\end{abstract}

\subjclass[2020]{37C45, 37D35, 37E05}


\maketitle

\section{Introduction}

Averaging observables along an orbit is a basic procedure in ergodic theory.  The pointwise ergodic theorem ensures that, with respect to any ergodic measure, these averages converge to a constant almost everywhere.  However, it does not provide information about the exceptional set of measure zero.  For large classes of dynamical systems $T:X \to X$ and functions $g: X \to \R$,
the decomposition induced by its Birkhoff averages exhibits considerable complexity. Indeed, for $\alpha \in \R$ the associated level set is defined by
\begin{equation*}
J(\alpha)= \left\{x \in X : \lim_{n \to \infty} \frac{1}{n} \sum_{i=0}^{n-1} g(T^i x)= \alpha 		\right\}.
\end{equation*}
This induces a decomposition of  $X$ given by
\begin{equation*}
X= \bigcup_{\alpha \in \R} J(\alpha) \cup J_{ir},
\end{equation*}
where $J_{ir}$ is the set of points for which the Birkhoff average fails to converge. In many situations every level set $J(\alpha)$ is dense in $X$. When $X$  is a metric space, the structure of this decomposition can be analyzed via the function $\alpha \mapsto b(\alpha):= \dim_H J(\alpha)$, where $ \dim_H$ denotes the Hausdorff dimension. The function $b( \cdot )$ is referred to as the  \emph{Birkhoff spectrum}. The study of the Birkhoff spectrum began in the late 1990s in the context  of  hyperbolic conformal dynamical  systems  and H\"older functions  (see \cite{pes}). In this setting, if $g$ is not not cohomologous to a constant, the decomposition induced by the Birkhoff averages is extremely complicated. 
The set of attainable values $\alpha$ forms a nontrivial interval, and each level set  $J(\alpha)$ is dense in $X$. Despite this complexity, the function $b$ is remarkably regular. Indeed, it is real analytic. This regularity is a consequence of the well-behaved thermodynamic formalism associated with the system.

We consider dynamical systems given by interval maps  $F:[0,1] \to [0,1]$ with countably many (full) branches. The dynamics is concentrated in the repeller $\Lambda \subset [0,1]$, and we assume the existence of a measure of maximal dimension $\mu$,  see Section \ref{sec:map} for details. We also consider  functions $\tau: \Lambda \to \R$, referred to as observables. These are assumed to satisfy conditions (see Section \ref{sec:obs}) which imply  that their  tail with respect to the measure $\mu$ has polynomial decay, meaning  there exists $0<\beta<1$ such that  $\mu(\tau>n)\asymp n^{-\beta}$.  In this setting, we go on to prove that   the function $b(\alpha)$ is  real analytic and that
$\lim_{\alpha \to \infty} b(\alpha)=\dim_H \Lambda$, see Proposition \ref{lemma: q b are analytic}. Moreover, in analogy to recent results obtained by Arima  \cite{Ari2}, we prove the following,

\begin{theorem}\label{thm: main theorem}
Let $\Lambda \subset [0,1]$ be the repeller associated to the map $F$ (as in Section \ref{sec:map}) and let $\tau:\Lambda \to \R$ be an observable (as in Section \ref{sec:obs}), then
\begin{equation}
    \lim_{\alpha\to\infty} \left(\dim_H \Lambda -\dim_H J(\alpha)\right)\alpha^x=
 \begin{cases}
       0 &  \text{ if }  x>\frac{\beta}{1-\beta};\\
       \infty & \text{ if } x<\frac\beta{1-\beta}.
    \end{cases} 
\end{equation}    
\end{theorem}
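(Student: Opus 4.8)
The plan is to reduce the asymptotics of $\dim_H\Lambda-\dim_H J(\alpha)$ as $\alpha\to\infty$ to the asymptotics of a variational/thermodynamic quantity, and then control that quantity using the polynomial tail $\mu(\tau>n)\asymp n^{-\beta}$. Concretely, by the variational principle for the Birkhoff spectrum of interval maps with countably many full branches, $b(\alpha)=\dim_H J(\alpha)$ should be given by a conditional variational principle,
\[
b(\alpha)=\sup\left\{\frac{h_\nu(F)}{\lambda_\nu(F)}\;:\;\nu\text{ is }F\text{-invariant},\ \textstyle\int\tau\,d\nu=\alpha\right\},
\]
where $\lambda_\nu(F)=\int\log|F'|\,d\nu$ is the Lyapunov exponent, and the maximising measures are equilibrium states for potentials of the form $-s_{q}\log|F'|+q\tau$. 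Since $\mu$ (the measure of maximal dimension) realises $\dim_H\Lambda=s_\infty$ and has $\int\tau\,d\mu<\infty$, as $\alpha\to\infty$ the optimal measures $\nu_\alpha$ must escape towards the "cusp" where $\tau$ is large, and one expects $\dim_H\Lambda-b(\alpha)\to 0$. The goal is the precise rate.

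**Step 1 (thermodynamic reformulation).** I would introduce, as in Proposition \ref{lemma: q b are analytic}, the pressure function $P(q):=P(-s(q)\log|F'|+q\tau)=0$ defining $s(q)$ implicitly, where $q<0$ is the Lagrange multiplier, and the Legendre-type relation $b(\alpha)=s(q)+q\,\alpha\cdot(\text{something})$ — more precisely, using that $\alpha=\alpha(q)$ is the derivative of the pressure in the $\tau$-direction at the equilibrium state. Real analyticity of $b$ and of $q\mapsto\alpha(q)$ on the relevant range (already established earlier) lets me transfer the limit $\alpha\to\infty$ to a limit $q\to q_0$ for the appropriate boundary value $q_0$ of the multiplier (here $q_0=0^-$, since $\int\tau\,d\mu<\infty$ forces the multiplier to vanish as $\alpha\to\infty$). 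Then $\dim_H\Lambda-b(\alpha)=s_\infty-s(q)+O(\cdot)$ and it remains to find the rate at which $s(q)\uparrow s_\infty$ and $\alpha(q)\to\infty$ as $q\to 0^-$.

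**Step 2 (tail estimates drive the rate).** This is the heart and the main obstacle. I would expand the pressure equation $P(-s(q)\log|F'|+q\tau)=0$ near $q=0$. Writing $s(q)=s_\infty-\delta$, the derivative of $P$ in the $s$-direction at $(s_\infty,0)$ is $-\lambda_\mu(F)<0$ (negative finite), so to first order $\delta$ is controlled by the $q$-dependent part of the pressure. The non-trivial contribution comes from $\int e^{q\tau}\,d\mu$-type terms: because $\mu(\tau>n)\asymp n^{-\beta}$ with $0<\beta<1$, the function $q\mapsto\int e^{q\tau}d\mu$ has a singular expansion near $q=0^-$ of the form $\int e^{q\tau}d\mu = 1 + q\int\tau\,d\mu + c\,|q|^{\beta}(1+o(1))$ (a standard one-sided Tauberian/Karamata computation, valid since $\beta<1$ so the mean is finite but higher moments diverge). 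Similarly $\alpha(q)=\int\tau\,d\nu_q$ behaves like $\partial_q$ of that, giving $\alpha(q)\asymp |q|^{\beta-1}\to\infty$ as $q\to0^-$, hence $|q|\asymp\alpha^{-1/(1-\beta)}$. Feeding this back, $\delta=s_\infty-s(q)\asymp |q|^{\beta}\asymp\alpha^{-\beta/(1-\beta)}$, which is exactly the threshold exponent $x=\beta/(1-\beta)$ in the statement: multiplying by $\alpha^x$ with $x>\beta/(1-\beta)$ gives $0$, with $x<\beta/(1-\beta)$ gives $\infty$.

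**Step 3 (rigour: from $\mu$ to equilibrium states, and uniformity).** The delicate points I anticipate are: (i) justifying that the tail of $\tau$ with respect to the relevant equilibrium states $\nu_q$ (not just $\mu$) is comparable to $n^{-\beta}$ uniformly for $q$ near $0$, so that the Karamata expansion can be applied along the whole family — this should follow from the Gibbs property and bounded distortion, which control $d\nu_q/d\mu$ on each cylinder up to constants; (ii) handling the coupling between the two parameters $s$ and $q$ in the pressure equation, i.e. showing the cross-terms $|q|^\beta\cdot\delta$ and $\delta^{1+\varepsilon}$ are genuinely lower order, which is a routine implicit-function-theorem argument once the leading expansion is in hand; and (iii) making sure the $\asymp$ in $\mu(\tau>n)\asymp n^{-\beta}$ (two-sided bounds, possibly without exact regular variation) is enough — it is, because the upper and lower bounds each propagate through Karamata to give matching upper/lower bounds on $\delta$, which is all the two-case statement requires. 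I would organise the write-up as: (a) recall the pressure/Legendre setup; (b) prove the one-sided Tauberian expansion of $q\mapsto\int e^{q\tau}d\nu$ under a polynomial tail bound; (c) solve the pressure equation asymptotically to get $\delta\asymp\alpha^{-\beta/(1-\beta)}$; (d) conclude. The single hardest step is (b)–(c) combined: extracting the $|q|^\beta$ singularity with two-sided control and transferring it correctly through the implicitly-defined $s(q)$.
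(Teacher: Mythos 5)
Your overall strategy is the same thermodynamic one the paper uses (an implicitly defined pressure relation linking $b(\alpha)$, a multiplier $q$, and the tail of $\tau$, with the limit $\alpha\to\infty$ transferred to the multiplier tending to $0$, and the exponents $|q|\asymp\alpha^{-1/(1-\beta)}$, $b^*-b(\alpha)\asymp\alpha^{-\beta/(1-\beta)}$ emerging from a Karamata-type computation), but two points in your Step 2--3 are genuinely problematic. First, your expansion $\int e^{q\tau}\,d\mu=1+q\int\tau\,d\mu+c|q|^\beta(1+o(1))$ and the parenthetical ``the mean is finite'' are wrong in this setting: with $\mu(\tau>n)\asymp n^{-\beta}$ and $\beta\in(0,1)$ one has $\int\tau\,d\mu=\infty$ (the paper uses this explicitly, e.g.\ to determine the finiteness region of the pressure), so the linear term does not exist and the correct leading singular behaviour is $1-c|q|^\beta(1+o(1))$. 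Your later formulas ($\alpha(q)\asymp|q|^{\beta-1}\to\infty$) are those of the infinite-mean case, so your write-up is internally inconsistent even though the final exponents happen to be right; as literally written the expansion step fails.

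Second, and more seriously, your point (i) — that the tails of $\tau$ under the equilibrium states $\nu_q$ are comparable to those under $\mu$ because ``the Gibbs property and bounded distortion control $d\nu_q/d\mu$ on each cylinder up to constants'' — is false. The Gibbs property gives $\nu_q([a])\asymp e^{q(\cdot)-q\tau_a}|F'_a|^{-s(q)}$ and $\mu([a])\asymp|F'_a|^{-b^*}$, so the ratio carries a factor $e^{-q\tau_a}|F'_a|^{b^*-s(q)}$ that is unbounded over the alphabet; there is no uniform constant. Controlling exactly this ratio is the content of the paper's hypothesis \hyperref[H3]{(H3)} (with the exponents $\beta_1,\beta_2$), and it only becomes harmless when combined with $b(\alpha)\to b^*$ so that the loss $\omega(n)^{\beta_i(b^*-b(\alpha))}$ is a small power — this is what the paper does quantitatively in Lemma~\ref{lemma: gap is comparable to integral of q(t)} (uniform upper bound on $\lambda(\mu_\alpha)$) and Proposition~\ref{prop: asymptotics of alpha over q-alpha} (the Gamma-function estimates giving $\alpha\asymp q(\alpha)^{-(1-\beta\mp\eps)}$), before concluding via $b'(\alpha)=q(\alpha)/\lambda(\mu_\alpha)$ and integration. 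Your plan treats the polynomial tail of $\mu$ as the only input and defers the transfer to $\nu_q$ to ``bounded distortion''; without invoking \hyperref[H2]{(H2)}--\hyperref[H3]{(H3)} at this precise point the Tauberian step (b)--(c) does not go through, so the heart of the argument is missing rather than routine.
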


In other words, the rate at which $b(\alpha)$ converges to $\dim_H \Lambda$ is controlled by a function of the polynomial exponent of the tail. This, in particular, implies that for observables as described in Section \ref{sec:obs}, different values of $\beta$ lead to fundamentally distinct Birkhoff spectra. 

As above, the observables we consider have (non-summable) polynomial tails with respect to the geometric measure as a consequence of more subtle (and technical) assumptions. These are hypothesis that allow us to control the growth and the interplay between  the number of cylinders of length one  for which the values of the observable lies in a certain fixed interval (Section~\ref{sec:obs} \hyperref[H1]{(H1)}--\hyperref[H3]{(H3)} ) and the $\mu$ measure of the these cylinders (Section \ref{sec:obs} \hyperref[H2]{(H2)}). In Section \ref{sec:simple}, under somewhat stronger assumptions,  we clarify the meaning and implications of  \hyperref[H1]{(H1)}  \hyperref[H2]{(H2)} and  \hyperref[H3]{(H3)}.

A natural example for this theory is the induced map of the Manneville-Pomeau map together with the observable defined as the corresponding first return time map (see Example~\ref{ex1}). In Section~\ref{sec:ex} we construct several examples that illustrate the assumptions on the observable and the system. Systems and observables that satisfy the hypotheses of our results 
can also occur as first return time maps for flows. This is readily seen simply by considering the suspension flow with base system $F$ and roof function equal to $\tau$. Other examples stem from the maps associated to certain numeration systems such as continued fractions expansions.

A result related to Theorem \ref{thm: main theorem} was obtained by Arima \cite{Ari2} for similar dynamical systems,  but for a class of more regular observables. In \cite[Theorem 1.1]{Ari2}   it is shown that the Birkhoff spectrum again converges to the Hausdorff dimension of the repeller at a polynomial rate. However, in that setting, the rate is determined by the Hausdorff dimension of the repeller, $\dim_H \Lambda$. Contrary to our Theorem, in order for that result to hold the dimension has to be strictly smaller than one.

It should be stressed that for this class of  dynamical systems but for a different class of observables,  the convergence  of the Birkhoff spectra to the Hausdorff dimension of the repeller can  be exponential \cite[Proposition 1.2]{Ari2} (see also \cite{IomJor15}).

Our method of proof combines modifications of ideas developed by Arima in \cite{Ari2} together with tail estimates for the observable. All of these are thermodynamic in nature.

\section{Preliminaries}
In this section we introduce the objects we are going to study throughout the article, we also recall results  and definitions  
that will be used and take the opportunity to fix notation. For two real sequences $(a_n)_n, (b_n)_n$, write $a_n=C^{\pm} b_n$ if there exists a uniform constant $C>0$ such that for all $n\in\N$, $C^{-1}\le\frac{a_n}{b_n}\le C$; write $a_n\lesssim b_n$ if for all $n$ large, $a_n\le C b_n$, and $a_n\asymp b_n$ if $a_n\lesssim b_n$ and $b_n\lesssim a_n$. The cardinality of a set $E$ is denoted by $\#E$.

\subsection{The symbolic space and thermodynamic formalism} 
Let $(\Sigma, \sigma)$  be the one-sided full-shift shift over the countable alphabet $\mathcal{A}=\N$. 
That is, we consider the space $\Sigma:= \left\{ (x_n)_{n \in \N} :  x_i \in \N\right\}$ together with   
the shift map $\sigma: \Sigma \to \Sigma$ defined by $\sigma(x_1x_2 x_3 \dots)=(x_2 x_3 \dots)$. 
Endow the space $\Sigma$ with the topology generated by the cylinder sets,
\[[i_1 i_2 \dots i_n]:=\left\{ (x_n) \in \Sigma : x_i=i_j \text{ for } j \in \{1,2, \dots n\}\right\}.\]
Note that with this topology the space $\Sigma$ is non-compact. Moreover, the system has infinite entropy. We define the \emph{$n$-th variation} of a function $\psi:\Sigma \to \R$ by
\[\text{var}_n(\psi)= \sup_{(i_1 \dots i_n) \in \N^n} \sup_{x,y \in [i_1 i_2 \dots i_n]} |\psi(x)-\psi(y)| . \]
A function $\psi:\Sigma \to \R$ is of \emph{summable variations} if $\sum_{j=2}^{\infty} \text{var}_j(\psi) <\infty$ and  
\emph{locally H\"older} if there exists $M>0$, $A_\psi\in(0,1)$ such that $\text{var}_j(\psi)\le MA_\psi^j$ for all $j\ge1.$

The thermodynamic formalism in this setting was studied by Mauldin and Urba\'nski \cite{MauUrb03} and also by Sarig \cite{Sar99}. We briefly recall  the basic properties that will be used through the article. Let $\psi: \Sigma \to \R$ be a function of summable variations. The \emph{topological pressure} of $\psi$ is defined by
 \[P_\sigma(\psi)=\lim_{n\to\infty}\frac1n\log\sum_{\sigma^nx=x}\exp \left(\sum_{i=0}^{n-1}\psi( \sigma^i x)\right).\]
Denote by $ \mathcal{M}_\sigma$ the space of $\sigma-$invariant probability measures.  The Variational Principle \cite{MauUrb03, Sar99} states that
  \[P_\sigma(\psi)=\sup\left\{h(\nu)+\int\psi\,d\nu:\nu\in \mathcal{M}_\sigma  , \int \psi\,d\nu>-\infty\right\},\]
where $h(\nu)$ denotes the entropy of the measure $\nu$.  An invariant measure $\nu \in \mathcal{M}_{\sigma}$ is an \emph{equilibrium state} for $\psi$ if $P_\sigma(\psi)=h(\nu)+\int\psi\,d\nu$, whenever $P_\sigma(\psi)<\infty$.

An equivalent condition of the pressure being finite is given by the following  result.
 \begin{lemma}\cite[Lemma 2.1.9]{MauUrb03}\label{lemma: MauUrb}
  We have that   $P_\sigma(\psi)<\infty$ if and only if 
     \[\sum_{a\in\mathcal A}\exp\left(\sup_{x\in[a]}\psi(x)\right)<\infty.\] 
 \end{lemma}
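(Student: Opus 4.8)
The plan is to bound the periodic partition function $Z_n(\psi):=\sum_{\sigma^nx=x}\exp\bigl(\sum_{i=0}^{n-1}\psi(\sigma^ix)\bigr)$ from above and below purely in terms of the single-cylinder quantities, thereby sandwiching $P_\sigma(\psi)=\lim_n\frac1n\log Z_n(\psi)$ between $\log S-\text{var}_1(\psi)$ and $\log S$, where $S:=\sum_{a\in\mathcal A}\exp\bigl(\sup_{x\in[a]}\psi(x)\bigr)\in(0,+\infty]$; the equivalence then drops out. Since $(\Sigma,\sigma)$ is the full shift on $\mathcal A=\N$, each $x$ with $\sigma^nx=x$ is the periodic point $\overline w:=www\cdots$ of a unique word $w=w_1\cdots w_n\in\mathcal A^n$, and $\sigma^i\overline w\in[w_{i+1}]$ for $0\le i<n$. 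Consequently
\[
\sum_{i=1}^{n}\inf_{x\in[w_i]}\psi(x)\ \le\ \sum_{i=0}^{n-1}\psi(\sigma^i\overline w)\ \le\ \sum_{i=1}^{n}\sup_{x\in[w_i]}\psi(x).
\]

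Exponentiating and summing over $w\in\mathcal A^n$, the right-hand inequality factorises over coordinates to give $Z_n(\psi)\le S^n$, hence $\frac1n\log Z_n(\psi)\le\log S$ and $P_\sigma(\psi)\le\log S$; in particular, when $S<\infty$ each $Z_n(\psi)$ is finite and $P_\sigma(\psi)<\infty$. Symmetrically, the left-hand inequality gives $Z_n(\psi)\ge\bigl(\sum_{a\in\mathcal A}e^{\inf_{[a]}\psi}\bigr)^{\!n}$, and since $\inf_{[a]}\psi\ge\sup_{[a]}\psi-\text{var}_1(\psi)$ this is at least $\bigl(e^{-\text{var}_1(\psi)}S\bigr)^n$; therefore $P_\sigma(\psi)\ge\log S-\text{var}_1(\psi)$. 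Combining, $\log S-\text{var}_1(\psi)\le P_\sigma(\psi)\le\log S$, so $P_\sigma(\psi)<\infty$ exactly when $S<\infty$, which is the assertion.

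The only delicate point — and the place where some regularity of $\psi$ is needed — is the lower estimate: passing from $\sum_a e^{\inf_{[a]}\psi}$ to $\sum_a e^{\sup_{[a]}\psi}$ costs $\text{var}_1(\psi)$, which is harmless only when $\text{var}_1(\psi)<\infty$. This is automatic for the locally H\"older potentials to which the lemma gets applied in this paper (there $\text{var}_1(\psi)\le MA_\psi$). For a potential of merely summable variation with $\text{var}_1(\psi)=\infty$ one has to work a little harder — noting first that $S=\infty$ whenever some $\sup_{[a]}\psi=\infty$, and otherwise exploiting the geometry of the cylinders on which $\psi$ takes large values, as in \cite[Lemma 2.1.9]{MauUrb03} — but the elementary sandwich above is the core of the argument and already yields everything needed in the sequel.
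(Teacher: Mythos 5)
Your sandwich argument is correct, and since the paper gives no proof of this lemma (it is quoted from Mauldin--Urba\'nski), yours is a welcome self-contained substitute: on the full shift the periodic-point sum factorises over letters, giving $Z_n(\psi)\le S^n$ and $Z_n(\psi)\ge\bigl(e^{-\mathrm{var}_1(\psi)}S\bigr)^n$, hence $\log S-\mathrm{var}_1(\psi)\le P_\sigma(\psi)\le\log S$, which is exactly the standard argument behind \cite[Lemma 2.1.9]{MauUrb03}. Note that the hypothesis in that reference (``acceptable'' potentials) amounts precisely to the finiteness of the first-level oscillation that you isolate, and every potential to which the lemma is applied in this paper ($-q\tau$ and $-q\tau-b\log|F'|$, with $\sum_{j\ge1}\mathrm{var}_j(\tau\circ\pi)<\infty$ and (F4)) does have finite first variation, so your proof covers all uses made of the lemma here.

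One correction to your closing paragraph, though: when $\mathrm{var}_1(\psi)=\infty$ the statement is not merely ``harder'' --- it is false in general for potentials that are only of summable variations in the paper's sense ($\sum_{j\ge2}\mathrm{var}_j<\infty$), so no amount of extra work recovers it. For a counterexample on the full shift over $\N$, let $\psi$ depend only on the first two coordinates, with $\psi(x)=0$ if $x_2=1$ and $\psi(x)=-x_1-x_2$ otherwise. Then $\mathrm{var}_j(\psi)=0$ for all $j\ge2$, while $\sup_{[a]}\psi=0$ for every $a$, so $S=\sum_a e^{\sup_{[a]}\psi}=\infty$; on the other hand $e^{\psi(x)}\le e^{1-x_2}$, and in the cyclic product defining $Z_n(\psi)$ each coordinate of the word occurs exactly once as a ``second coordinate'', so $Z_n(\psi)\le e^n\bigl(\sum_{b\ge1}e^{-b}\bigr)^n=(1-e^{-1})^{-n}$ and $P_\sigma(\psi)\le-\log(1-e^{-1})<\infty$. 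So finite first variation (acceptability) is a genuine hypothesis of the lemma rather than a removable convenience; it is worth stating it explicitly when the lemma is invoked, which is harmless here since the relevant potentials satisfy it.
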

  In addition, it is possible to approximate the pressure defined on this non-compact space by the topological pressure of compact invariant subspaces.

\begin{lemma}(Approximation property, \cite[Theorem 2.1.6]{MauUrb03})\label{lemma: approximation property of pressure}
    If $\psi:\Sigma\to\R$ has summable variations then
    \[P_\sigma(\psi)=\sup\left\{h(\nu) + \int \psi\, d\nu:\nu  \in \mathcal{M}_{\sigma }\text{ and compactly supported}\right\}. \]
\end{lemma}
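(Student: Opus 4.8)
The inequality $\geq$ in the statement is immediate: every compactly supported $\sigma$-invariant measure $\nu$ lies in $\M_\sigma$ and (as I record at the end) automatically satisfies $\int\psi\,d\nu>-\infty$, so the right-hand side is a supremum over a subclass of the measures appearing in the Variational Principle. The content is the reverse inequality, and the plan is to exhaust the non-compact system by its compact full subshifts. Write $\Sigma_N:=\{1,\dots,N\}^{\N}\subset\Sigma$ — a compact $\sigma$-invariant set — and put $P_N:=P_{\sigma|_{\Sigma_N}}(\psi|_{\Sigma_N})$. Summable variation forces $\text{var}_n(\psi)\to0$, so $\psi$ is continuous and hence bounded on each compact $\Sigma_N$; the classical Variational Principle for the compact system $(\Sigma_N,\sigma)$ therefore applies and gives $P_N=\sup\{h(\nu)+\int\psi\,d\nu:\nu\in\M_{\sigma|_{\Sigma_N}}\}$, with every such $\nu$ a compactly supported $\sigma$-invariant measure. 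Consequently
\[
\sup_N P_N\ \leq\ \sup\Big\{h(\nu)+\int\psi\,d\nu:\ \nu\ \text{compactly supported}\Big\}\ \leq\ P_\sigma(\psi),
\]
the last step by the full Variational Principle. So it suffices to prove $\sup_N P_N\geq P_\sigma(\psi)$; once this holds all three quantities coincide and the lemma follows.

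For $\sup_N P_N\geq P_\sigma(\psi)$ I would argue with partition functions. For finite $A\subset\N$ set $Z^{(A)}_n:=\sum_{\omega\in A^n}\exp\big(\sup_{x\in[\omega]}S_n\psi(x)\big)$ where $S_n\psi=\sum_{i=0}^{n-1}\psi\circ\sigma^i$, and let $Z_n:=Z^{(\N)}_n$. The structural input supplied by summable variation is the bounded-distortion estimate $\sup_{[\omega]}S_n\psi-\inf_{[\omega]}S_n\psi\leq V_\psi:=\sum_{j\geq1}\text{var}_j(\psi)<\infty$, valid for every word $\omega$ of length $n$. Splitting a word of length $m+n$ gives $Z^{(A)}_{m+n}\leq Z^{(A)}_mZ^{(A)}_n$, and concatenating near-optimal words gives $Z^{(A)}_{m+n}\geq e^{-V_\psi}Z^{(A)}_mZ^{(A)}_n$; hence $n\mapsto\log Z^{(A)}_n-V_\psi$ is superadditive, and Fekete's lemma yields
\[
P_{\sigma|_{\Sigma_A}}(\psi)=\lim_n\tfrac1n\log Z^{(A)}_n=\sup_n\tfrac1n\big(\log Z^{(A)}_n-V_\psi\big),
\]
the same identity with $A=\N$ computing $P_\sigma(\psi)$ (this agrees with the periodic-point definition in the text, since for the full shift length-$n$ words $\omega$ correspond to $n$-periodic points $\overline{\omega\omega\cdots}$ and $S_n\psi$ there differs from $\sup_{[\omega]}S_n\psi$ by at most $V_\psi$). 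Now fix $n$: since $P_N\geq\tfrac1n(\log Z^{(N)}_n-V_\psi)$ for every $N$ and $Z^{(N)}_n\uparrow Z_n$ as $N\to\infty$ (monotone convergence over the summation index), letting $N\to\infty$ gives $\sup_N P_N\geq\tfrac1n(\log Z_n-V_\psi)$, and taking $\sup_n$ gives $\sup_N P_N\geq P_\sigma(\psi)$. This also covers $P_\sigma(\psi)=+\infty$: then $Z_1=\sum_{a\in\N}e^{\sup_{x\in[a]}\psi(x)}=\infty$ by Lemma~\ref{lemma: MauUrb}, so $Z^{(N)}_1\to\infty$ and already $P_N\geq\log Z^{(N)}_1-V_\psi\to\infty$.

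Finally I would record the remark that justifies restricting to compactly supported measures. If $K\subset\Sigma$ is compact with $\sigma K\subseteq K$ — which holds for $K=\supp\nu$, $\nu\in\M_\sigma$, because $\nu(\sigma^{-1}K)=\nu(K)=1$ forces $K\subseteq\sigma^{-1}K$ — then the projection of $K$ to the first coordinate is a compact, hence finite, subset $\{a_1,\dots,a_m\}\subset\N$, and forward invariance propagates this bound to every coordinate, so $K\subseteq\{a_1,\dots,a_m\}^{\N}\subseteq\Sigma_M$ with $M=\max_i a_i$. Thus $\psi$ is bounded on $\supp\nu$, $\int\psi\,d\nu$ is finite, and moreover $h(\nu)+\int\psi\,d\nu\leq P_M\leq\sup_N P_N$, closing the circle. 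The one genuine obstacle is the interchange of the limit in $n$ with the supremum over $N$: a priori $P_N$ only increases to some limit $\leq P_\sigma(\psi)$, and what rescues the opposite inequality is exactly the Fekete representation $P_\sigma(\psi)=\sup_n\tfrac1n(\log Z_n-V_\psi)$, which itself rests on the bounded-distortion consequence of summable variation — this is the single place where the hypothesis is really used. Everything else (continuity and boundedness of $\psi$ on $\Sigma_N$, the compact Variational Principle, monotone convergence of $Z^{(N)}_n$, and the periodic-point versus cylinder-sum comparison) is routine.
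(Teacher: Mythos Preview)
The paper does not supply its own proof here; the lemma is quoted from \cite[Theorem~2.1.6]{MauUrb03}. Your argument---exhausting by the finite-alphabet subshifts $\Sigma_N$, invoking the classical Variational Principle on each, and showing $\sup_N P_N=P_\sigma(\psi)$ via a Fekete/superadditivity representation of the partition sums---is precisely the standard route taken in Mauldin--Urba\'nski and in Sarig's work, and the overall architecture is sound.

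There is, however, one genuine technical gap. You set $V_\psi:=\sum_{j\ge1}\text{var}_j(\psi)$ and assert it is finite, but the paper's definition of \emph{summable variations} only requires $\sum_{j\ge2}\text{var}_j(\psi)<\infty$; nothing prevents $\text{var}_1(\psi)=\infty$ (this already happens for $\psi=-\log|F'|$ in the Gauss-map example, which is why the paper separately imposes finite $1$-variation in (F4)). When $V_\psi=\infty$ your superadditivity constant blows up, the representation $P=\sup_n\tfrac1n(\log Z_n-V_\psi)$ is vacuous, and the crucial interchange of $\sup_N$ with $\lim_n$ is no longer justified. The standard repair is to localise: fix a symbol $a$ and work with the Gurevich sums $Z_n^a:=\sum_{\sigma^n x=x,\ x_1=a}e^{S_n\psi(x)}$. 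If $\omega,\omega'$ are words of lengths $m,n$ both beginning with $a$, then the periodic point of $\omega\omega'$ agrees with that of $\omega$ on the first $m{+}1$ coordinates (the $(m{+}1)$-st coordinate of each being $a$), so the distortion comparison uses only $\sum_{j\ge2}\text{var}_j(\psi)<\infty$. With this finite constant your Fekete argument runs verbatim, and since on the full shift the localised pressure is independent of $a$ and equals $P_\sigma(\psi)$, the conclusion follows. The remainder of your write-up---the treatment of $P_\sigma(\psi)=+\infty$ via Lemma~\ref{lemma: MauUrb}, and the observation that every compact invariant set embeds in some $\Sigma_M$---is correct as written.
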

The above result also means that if $K \subset \Sigma$ is a compact invariant subset and we denote by $P_K(\cdot)$ the restriction of the pressure to the set $K$, then $P_\sigma(\psi) = \lim_{K}P_K(\psi)$ where the limit is taken over larger and larger invariant compact subsets. There is a special class of measures that will be of interest to us.

\begin{definition}
    A measure $\nu \in \mathcal{M}_{\sigma}$ is said to have the \emph{Gibbs property} if there exists $M>1$ such that for all $n\in\N$, all $n-$cylinder $[i_1,  \dots , i_n]$ and $x\in [i_1,  \dots , i_n]$,
    \begin{equation}\label{eqn: def Gibbs property}
        \frac1M\le \frac{\nu([i_1,  \dots , i_n])}{\exp\left(S_n\psi(x)-nP(\psi)\right)}\le M.
    \end{equation}
\end{definition}
The following result summarises the properties of the pressure in this setting, see \cite{MauUrb03,Sar03} and more specifically \cite[Theorem 2.3]{IomJor13}. 

\begin{proposition} \label{prop:pres}
Let $(\Sigma, \sigma)$ be the full-shift and $\psi:\Sigma \to \R$ a non-positive function of summable variations. Then, there exists $t^* \geq 0$ such  
\begin{equation*}
P_\sigma(t \psi)=
\begin{cases}
\infty & \text{ if } t < t^*;\\
\text{finite and real analytic}   & \text{ if } t > t^*.
\end{cases}
\end{equation*}
Moreover, if $ t>t^*$ then there exists a unique equilibrium measure for $t\psi$ and it is a Gibbs measure.
\end{proposition}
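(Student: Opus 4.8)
The plan is to decouple the soft, order-theoretic content (the existence of $t^*$ and the finite/infinite dichotomy) from the analytic content (real analyticity of the pressure and the Ruelle--Perron--Frobenius description of the equilibrium state on the region where the pressure is finite). It suffices to treat $t\ge 0$: for $t\le 0$ one has $t\psi\ge 0$, so $\sum_{a\in\mathcal A}\exp(\sup_{x\in[a]}t\psi)\ge\sum_{a\in\N}1=\infty$ and hence $P_\sigma(t\psi)=\infty$ by Lemma \ref{lemma: MauUrb}.

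First I would record that, since $\psi\le 0$, the family $t\mapsto t\psi$ is pointwise non-increasing in $t\ge 0$, so by monotonicity of topological pressure $t\mapsto P_\sigma(t\psi)$ is non-increasing; by the Variational Principle it is moreover convex, being a supremum of the affine maps $t\mapsto h(\nu)+t\int\psi\,d\nu$. Hence $I:=\{t\ge 0:P_\sigma(t\psi)<\infty\}$ is an up-set of $[0,\infty)$ (if $t\in I$ and $s>t$ then $s\psi\le t\psi$, so $P_\sigma(s\psi)\le P_\sigma(t\psi)<\infty$), and therefore $I=(t^*,\infty)$ or $I=[t^*,\infty)$ with $t^*:=\inf I\in[0,\infty]$. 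Finiteness on $(t^*,\infty)$ is then automatic, since for $t>t^*$ one picks $s\in I$ with $s<t$ and gets $P_\sigma(t\psi)\le P_\sigma(s\psi)<\infty$. That $t^*\ge 0$ follows from $0\notin I$: $P_\sigma(0\cdot\psi)$ is the topological entropy of the countable full shift, which is infinite, or equivalently $P_\sigma(0)<\infty$ would force $\sum_{a\in\mathcal A}\exp(\sup_{x\in[a]}0)=\sum_{a\in\N}1<\infty$ by Lemma \ref{lemma: MauUrb}.

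It remains to treat $t>t^*$. Fix such a $t_0$. Then $t_0\psi$ has summable variations and finite (Gurevich, hence topological) pressure, and since the full shift is topologically mixing with the big-images-and-preimages property, $t_0\psi$ is positive recurrent, so the generalised Ruelle--Perron--Frobenius theorem applies (\cite{Sar99,Sar03,MauUrb03}): there exist $\lambda_{t_0}=e^{P_\sigma(t_0\psi)}>0$, a conservative $\lambda_{t_0}$-conformal measure $\nu_{t_0}$, and a positive eigenfunction $h_{t_0}$ of the transfer operator $\mathcal L_{t_0\psi}$ bounded away from $0$ and $\infty$ on each cylinder. The probability $d\mu_{t_0}:=h_{t_0}\,d\nu_{t_0}$ is then the unique equilibrium state for $t_0\psi$, and the Gibbs bounds \eqref{eqn: def Gibbs property} are read off from conformality of $\nu_{t_0}$, the pointwise bounds on $h_{t_0}$, and the bounded distortion supplied by summable variations. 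For real analyticity near $t_0$, I would realise $\mathcal L_{t\psi}$ as a bounded operator on a suitable Banach space (a weighted locally H\"older space, or the Gibbs space adapted to $\mu_{t_0}$), verify that $t\mapsto\mathcal L_{t\psi}$ extends to a complex-analytic operator-valued family on a complex neighbourhood of $t_0$ (the step that uses the exponential decay of $\text{var}_j(t\psi)$ and the finiteness of $\sum_{a}\exp(\sup_{[a]}t\psi)$, locally uniformly in $t$), and check that $\lambda_{t_0}$ is a simple isolated eigenvalue with a spectral gap. Analytic perturbation theory then yields an analytic branch $t\mapsto\lambda_t$, so $P_\sigma(t\psi)=\log\lambda_t$ is real analytic on $(t^*,\infty)$, $t_0$ being arbitrary.

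The \emph{main obstacle} is this last analytic machinery on the non-compact system: choosing a Banach space on which $\mathcal L_{t\psi}$ genuinely has a spectral gap, verifying the locally uniform analytic dependence of $\mathcal L_{t\psi}$ on $t$ (where the summable-variation and finite-pressure hypotheses are essential), and upgrading the RPF eigendata to uniqueness of the equilibrium state and to \eqref{eqn: def Gibbs property}. By contrast the monotonicity/convexity skeleton and the identification of $t^*$ are routine. In practice these analytic facts are exactly what is packaged in \cite[Theorem 2.3]{IomJor13} together with \cite{MauUrb03,Sar03}, so one would cite them rather than reprove them here.
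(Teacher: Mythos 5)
The paper does not actually prove this proposition: it is quoted from the literature, with the pointer ``see \cite{MauUrb03,Sar03} and more specifically \cite[Theorem 2.3]{IomJor13}'', plus the remark that the locally H\"older hypothesis there can be relaxed to summable variations via the proof of \cite[Theorem 4]{Sar01}. Your sketch reconstructs exactly the standard argument packaged in those references (monotonicity/convexity to locate $t^*$, Lemma~\ref{lemma: MauUrb} for $t\le 0$, then BIP $+$ finite pressure $+$ summable variations to get positive recurrence, the RPF eigendata, the Gibbs property, and a perturbative argument for analyticity), and you end by deferring to the same citations, so in substance your route coincides with the paper's.

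Two points where your sketch, read as a standalone proof, is thinner than what the cited results supply. First, the passage from the RPF/Gibbs measure $\mu_{t_0}=h_{t_0}\,d\nu_{t_0}$ to ``the unique equilibrium state'' requires checking $\int\psi\,d\mu_{t_0}>-\infty$ (equivalently $h(\mu_{t_0})<\infty$); this is where the strict inequality $t_0>t^*$ enters: choosing $t'\in(t^*,t_0)$, Lemma~\ref{lemma: MauUrb} gives $\sum_a\exp\bigl(t'\sup_{[a]}\psi\bigr)<\infty$, and the Gibbs bounds together with the boundedness of $u\mapsto u\,e^{-(t_0-t')u}$ on $[0,\infty)$ yield the integrability. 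As written, your argument never uses strictness and would apply verbatim at $t=t^*$ whenever the pressure is finite there, where the conclusion can fail --- precisely the phenomenon the paper flags in its remark about Gibbs measures with infinite Lyapunov exponent that are not equilibrium states. Second, your analyticity step (spectral gap for $\mathcal L_{t\psi}$ on a weighted locally H\"older space, invoking exponential decay of $\text{var}_j$) presupposes more regularity than summable variations; for the hypothesis actually assumed here one needs the argument of \cite[Theorem 4]{Sar01}, which is exactly why the paper appends its remark after the proposition. Neither issue is a wrong idea, but both should be made explicit (or explicitly delegated to \cite{IomJor13,Sar01,Sar03,bs}) for the proof to be complete under the stated hypotheses.
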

\begin{remark}
   The original theorem in \cite{IomJor13} assumes $\psi$ locally H\"older instead of summable variations; but according to the argument and the proof of \cite[Theorem 4]{Sar01}, the same statement holds for $\psi$ of summable variations.
\end{remark}
When the pressure is real analytic there exists a formula for its first derivative. Indeed, if $t >t^*$ then
\begin{equation} \label{dp}
P_{\sigma}'(t \psi) \Big|_{t=t_0} = \int \psi \, d \mu_{t_0},
\end{equation}
where $\mu_{t_0}$ is the unique equilibrium measure for $t_0 \psi$, see for example  \cite[Proposition 2.6.13]{MauUrb03}. 

\subsection{The class of maps} \label{sec:map} Given a compact non degenerate interval $[a,b] \subset \R$, let $\{I_n\}_n\subset [a,b]$ be a countable collection closed subintervals such that their interiors are pairwise disjoint and let $F:\cup_nI_n\to [a,b]$ be a map. The \emph{repeller} of the map $F$ is defined by
\begin{equation*}
\Lambda:=\{x\in [a,b] :F^j(x)\in\cup_{n}I_n \text{ for all } j\in \N\}.
\end{equation*}
We make the following assumptions on the map $F$.

\begin{enumerate}
\item[(F1)] The Hausdorff dimension of the boundary is zero, that is $\dim_H(\overline{\cup_n \partial I_n})=0$, where $\partial I$ denotes the boundary of the set $I$.
\item[(F2)] Expansiveness. The map is of class $C^1$ on  $\text{int} I_n$, the interior of each interval $I_n$, for every $n \in \N$. Moreover, there exists $A>1$ such that $|F'(x)|>A$ for every $x \in \cup_n \text{int} I_n$.
\item[(F3)] The map $F$ restricted to $\Lambda$ is topologically conjugated to the full shift $(\Sigma, \sigma)$ via the projection  map $\pi: \Sigma \to \Lambda$.
\item[(F4)] The function $|F'| \circ \pi$ is of summable variations and has finite  $1-$variation.
\end{enumerate}

Note that the Markov structure that we have assumed implies that there is a bijection between the space $\mathcal{M}_{\sigma}$ and that of $F$-invariant probability measures, that we denote by $\mathcal{M}_F$. For a continuous function $g: \Lambda \to \R$ we define the topological pressure of $g$ with respect to $F$ by 
    \[P_F(g):=\sup\left\{h(\nu) + \int g \, d\nu:\nu\in \mathcal M_F,\int g \, d\nu>-\infty\right\}.\]
If  $g  \circ  \pi$ is of summable variations we have that   $P_F(g)=P_\sigma(g\circ\pi)$. Therefore,  results obtained for one of the pressure are translatable to the other pressure. Hence, we identify them with the notation $P(\cdot)$.

Recall that  for each $\nu \in \mathcal{M}_F$, its \emph{Lyapunov exponent} is given by $\lambda(\nu):=\int\log|F'|\, d\nu$.
We now state our final hypothesis on the map $F$, a Bowen-type formula.  

\begin{enumerate}
\item[(F5)]
Assume  that  the function $-(\dim_H \Lambda) \log |F'|$ has zero pressure and an equilibrium state that we denote by $\mu$.
\end{enumerate}

\begin{remark}
There exist examples of maps that satisfy conditions (F1)-(F4) but not (F5), see  \cite[Example 5.3]{MauUrb96}. We have that $\dim_H(\mu)=\dim_H \Lambda$ and hence it is called measure of maximal dimension.  We stress that under these assumptions the equilibrium state for  $-(\dim_H \Lambda) \log |F'|$ is unique, this follows from \cite[Theorem 1.1]{bs}.  Note that there exist examples for which the pressure of   $-(\dim_H \Lambda) \log |F'|$ is zero and there is a corresponding Gibbs measure $\nu$ which is not an equilibrium state. This occurs when $\lambda(\nu)= \infty$, see for example \cite[p. 1757]{Sar03}.
 \end{remark}

From now on denote $b^*:=\dim_H \Lambda$. Note that since $b^*\in (0,1]$ the above implies that in Proposition \ref{prop:pres} the value of $t^*$ is less than or equal to $1$. There is a wide range of examples of maps satisfying these assumptions, we discuss some of them in Section \ref{sec:ex}.

\subsection{The observable}  \label{sec:obs}
We will be interested in studying functions that have polynomial tails with respect to the natural geometric measure $\mu$ in $\Lambda$.  
For a function $\tau: \Lambda \to \R_{\ge0}$ and  $a \in \mathcal{A}$ we denote by $\tau_a$ the restriction of $\tau$ to the cylinder $\pi[a]$.  Also, the notation  $\mu(\tau>n)$ stands for $\mu(\{x \in \Lambda : \tau(x)>n \})$ and  $\mu \left(\left\{a:\inf\tau_a\in[\omega(n),\omega(n+1))\right\}\right)$  for $\mu \left(\cup [a] \right)$ where the union is over 
$\{a:\inf\tau_a\in[\omega(n),\omega(n+1))\}$. Before stating our assumptions on $\tau$, we introduce the following notion.

\begin{definition}\label{def: slowly varying func} A real valued function $\ell:\N\to[0,\infty)$ is said to be \emph{slowly varying} if $\lim_{n\to\infty}\frac{\ell(\lambda n)}{\ell (n)}=1$ for all $\lambda>0$. 
 \end{definition}
\begin{remark}
A slowly varying function grows (or decays) slower than any polynomial.
If $\ell$ is slowly varying and $h$ is a monotone increasing function such that $\liminf_{n\to\infty}\frac{\log(h(n))}{n}\in(0,\infty]$, then for all $\eps>0$ and all $n$ sufficiently large, $h(n)^{-\eps}\lesssim\ell(h(n))\lesssim h(n)^\eps$.
\end{remark}

We assume that the observable $\tau$ satisfies the following: $\sum_{j\ge1}\text{var}_j(\tau\circ\pi)<\infty$, and there is a differentiable function $\omega:\R\to\R$, strictly increasing with $\lim_{x\to\infty}\omega(x)=\infty$ and $\frac{\omega(x+1)}{\omega(x)}\le c$ for some $c>0$ such that
\begin{enumerate}
    \item[(H1)]\label{H1} for all $\eps>0$, $1\le \#\left\{a:\inf\tau_a\in[\omega(n),\omega(n+1))\right\}\lesssim e^{\eps \omega(n)}$,
    \item[(H2)]\label{H2} there exists $\beta\in(0,1)$ such that for all $n \in \N$,
    $$\mu\left(\left\{a:\inf\tau_a\in[\omega(n),\omega(n+1))\right\}\right)=\ell(\omega(n))\frac{\omega'(n)}{\omega(n)^{\beta+1}},$$
    \item[(H3)]\label{H3} there exist constants $\beta_1,\beta_2\ge \beta$ satisfying the following: for all $b$ close to $b^*$, there exists $q_0>0$ and such that for all $q\in[0,q_0)$, $\mu_{q,b}$ the equilibrium state for $-q\tau-b\log |F'|$, and
 all $n\in\N$,
    \begin{equation}\label{eqn: perturbations of acim} 
e^{q\omega(n)}\omega(n+1)^{-\beta_2(b^*-b)}\lesssim\frac{\mu\left(\left\{a:\inf\tau_a\in[\omega(n),\omega(n+1))\right\}\right)}{\mu_{q,b}\left(\left\{a:\inf\tau_a\in[\omega(n),\omega(n+1))\right\}\right)}\lesssim e^{q\omega(n+1)}\omega(n)^{-\beta_1(b^*-b)}.
\end{equation}
\end{enumerate}

\begin{remark}\label{rmk:Cconds} 
The following are consequences of the assumptions on the observable. 
\begin{enumerate}
 \item 
 \hyperref[H2]{(H2)} implies a non-summable polynomial tail for the observable $\tau$: for all $\eps>0$, since $\omega(n)^{-\eps}\lesssim \ell(\omega(n))\lesssim \omega(n)^{\eps},$
 \begin{equation}\label{eqn: polynom tail}
\mu(\tau>n)\asymp\int^\infty_{\omega^{-1}(n)} \frac{\omega'(x)}{\omega(x)^{\beta+1\pm\eps}}dx=-\left.\frac1{\beta\pm\eps}\omega(x)^{-\beta\pm\eps}\right|_{\omega^{-1}(n)}^\infty\asymp n^{-\beta\pm\eps}.
\end{equation}
As this holds for all $\eps>0$, we conclude that $\mu(\tau>n)\asymp n^{-\beta}$.
\item
When $\omega(n)\asymp n^\kappa$, \hyperref[H2]{(H2)} implies $\mu\left(\left\{a:\inf\tau_a\in[\omega(n),\omega(n+1))\right\}\right)=\ell(\omega(n))n^{-(\beta\kappa+1)},$ and when $\omega(n)\asymp e^{rn}$ for some $r>0$, $\mu\left(\left\{a:\inf\tau_a\in[\omega(n),\omega(n+1))\right\}\right)=\ell(\omega(n))e^{-\beta rn}.$ 
\item If $\omega(n)\asymp n^k$ for some $\kappa>0$, then as $\text{var}_1(\tau\circ\pi)<\infty$, the $\omega(n+1)$ terms in \eqref{eqn: perturbations of acim} can be replaced by $\omega(n)$.
\end{enumerate}
\end{remark}

\subsection{Simplifications of the H conditions in terms of numbers of branches} \label{sec:simple}
Under somewhat stronger assumptions we show how the conditions  \hyperref[H1]{(H1)}  \hyperref[H2]{(H2)} and  \hyperref[H3]{(H3)} are related and simplified. For a large class of systems and observables this eases the verification of the assumptions. We give two lemmas which concern the relationship between our conditions and $\#\left\{a:\tau_a\in[\omega(n),\omega(n+1))\right\}$.

\begin{definition}\label{def: comparable scaling}
   We have \emph{$\omega$-comparable $K$-scaling} if there exists $K>0$ uniform such that whenever $a, a'\in \mathcal A$ have $\inf\tau_a, \inf\tau_{a'}\in [\omega(n), \omega(n+1))$, then $\sup_{x\in [a], y\in [a']}\frac{|F'(x)|}{|F'(y)|}= K^{\pm1}$.
\end{definition}

\begin{lemma}\label{lemma: H3 by uniform branch count}
Suppose that there exists $K\ge 1$, such that $\#\left\{a:\tau_a\in[\omega(n),\omega(n+1))\right\}\in[1,K]$, and we have $\omega$-comparable $K$-scaling with $\omega(n)= n^\kappa$ for $\kappa>0$.  Then \hyperref[H2]{(H2)} implies \hyperref[H3]{(H3)} (with $\beta_{1,2}=\frac{\beta+1/\kappa}{b^*}$).
\end{lemma}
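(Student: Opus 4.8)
The plan is to reduce the ratio in \hyperref[H3]{(H3)} to a Gibbs estimate for single one-cylinders and then substitute \hyperref[H2]{(H2)}. Write $A_n:=\left\{a\in\mathcal A:\inf\tau_a\in[\omega(n),\omega(n+1))\right\}$, so that $1\le\#A_n\le K$, fix a representative $a=a(n)\in A_n$, and pick $x_{a'}\in\pi[a']$ for each $a'\in A_n$. Since $\mu$ is the equilibrium state of $-b^*\log|F'|$, which has zero pressure, and $\mu_{q,b}$ is the (unique, hence Gibbs) equilibrium state of $-q\tau-b\log|F'|$ with pressure $P(q,b):=P(-q\tau-b\log|F'|)$, the Gibbs property \eqref{eqn: def Gibbs property} at level $n=1$, together with $\sum_j\text{var}_j(\tau\circ\pi)<\infty$ and the finite $1$-variation of $|F'|\circ\pi$ from (F4) (and that $\log$ is Lipschitz on $[A,\infty)$), gives, uniformly in $a'$ and in the choice of $x_{a'}$,
\[
\mu(\pi[a'])=C^{\pm}\,|F'(x_{a'})|^{-b^*},\qquad
\mu_{q,b}(\pi[a'])=C^{\pm}\,e^{-q\tau(x_{a'})}\,|F'(x_{a'})|^{-b}\,e^{-P(q,b)} ,
\]
where $e^{\pm P(q,b)}$ is bounded above and below by positive constants for $q\in[0,q_0)$ and $b$ close to $b^*$, by the continuity/analyticity of the pressure in Proposition \ref{prop:pres}.

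Next I would feed in the $\omega$-comparable $K$-scaling and $1\le\#A_n\le K$. The $K$-scaling gives $|F'(x_{a'})|=C^{\pm}|F'(x_a)|$ for all $a'\in A_n$, while $\inf\tau_{a'}\in[\omega(n),\omega(n+1))$ and $\text{var}_1(\tau\circ\pi)<\infty$ give $\omega(n)\le\tau(x_{a'})<\omega(n+1)+\text{var}_1(\tau\circ\pi)$. Summing the estimates above over the at most $K$ elements of $A_n$ therefore yields
\[
\mu\Big(\bigcup_{a'\in A_n}\pi[a']\Big)=C^{\pm}|F'(x_a)|^{-b^*},\qquad
C^{-1}e^{-q\omega(n+1)}|F'(x_a)|^{-b}\le\mu_{q,b}\Big(\bigcup_{a'\in A_n}\pi[a']\Big)\le C\,e^{-q\omega(n)}|F'(x_a)|^{-b} ,
\]
the asymmetry on the right being exactly what forces the $\omega(n)$ versus $\omega(n+1)$ discrepancy in \eqref{eqn: perturbations of acim}. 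Dividing, the ratio in \hyperref[H3]{(H3)} lies between $C^{-1}e^{q\omega(n)}|F'(x_a)|^{-(b^*-b)}$ and $C\,e^{q\omega(n+1)}|F'(x_a)|^{-(b^*-b)}$.

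It then remains to eliminate $|F'(x_a)|$ via \hyperref[H2]{(H2)}. With $\omega(n)=n^\kappa$, \hyperref[H2]{(H2)} reads $\mu(\bigcup_{a'\in A_n}\pi[a'])=C^{\pm}\ell(\omega(n))\,\omega(n)^{-\beta-1/\kappa}$; comparing with the equality $\mu(\bigcup_{a'\in A_n}\pi[a'])=C^{\pm}|F'(x_a)|^{-b^*}$ gives $|F'(x_a)|=C^{\pm}\ell(\omega(n))^{-1/b^*}\omega(n)^{(\beta+1/\kappa)/b^*}$, hence $|F'(x_a)|^{-(b^*-b)}=C^{\pm}\ell(\omega(n))^{(b^*-b)/b^*}\,\omega(n)^{-\beta_*(b^*-b)}$ with $\beta_*:=\frac{\beta+1/\kappa}{b^*}$. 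Substituting back shows that the ratio in \hyperref[H3]{(H3)} is, up to constants, between $e^{q\omega(n)}\ell(\omega(n))^{(b^*-b)/b^*}\omega(n)^{-\beta_*(b^*-b)}$ and $e^{q\omega(n+1)}\ell(\omega(n))^{(b^*-b)/b^*}\omega(n)^{-\beta_*(b^*-b)}$, which is \eqref{eqn: perturbations of acim} with $\beta_1=\beta_2=\beta_*$ up to the slowly varying factor $\ell(\omega(n))^{(b^*-b)/b^*}$.

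The delicate point — and the only real obstacle — is this residual slowly varying factor, which is not a constant. Since $\ell$ is slowly varying, so is $n\mapsto\ell(\omega(n))$, and a slowly varying function is subpolynomial: for every $\delta>0$ one has $\omega(n)^{-\delta}\lesssim\ell(\omega(n))^{(b^*-b)/b^*}\lesssim\omega(n)^{\delta}$ for all large $n$. Hence it does not touch the exponent $\beta_*$ and is absorbed into the $\lesssim$/$\gtrsim$ of \eqref{eqn: perturbations of acim} (or, if one insists on genuine constants, the same computation delivers \hyperref[H3]{(H3)} with $\beta_1,\beta_2$ arbitrarily close to $\beta_*$ and still $\ge\beta$, since $b^*\le1$ forces $\beta_*\ge\beta+1/\kappa>\beta$). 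One should also check that the Gibbs constants of $\mu$ and $\mu_{q,b}$, and the pressure $P(q,b)$, can be controlled uniformly for $q\in[0,q_0)$ and $b$ near $b^*$ — which again follows from Proposition \ref{prop:pres}, shrinking $q_0$ if necessary.
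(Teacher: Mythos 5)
Your proposal is correct and takes essentially the same route as the paper's proof: Gibbs estimates for $\mu$ and $\mu_{q,b}$ on the at most $K$ mutually comparable one-cylinders, substitution of (H2) to express $|F'(x_a)|^{-b^*}$ as $\ell(\omega(n))\,\omega(n)^{-\beta-1/\kappa}$ up to constants, and then taking the ratio. The only cosmetic differences are that you carry the pressure constant $e^{-P(q,b)}$ and the slowly varying factor explicitly (absorbing the latter by subpolynomiality, or by perturbing $\beta_{1,2}$), whereas the paper hides both in the implicit constants and in the $\pm\eps$ exponents before letting $\eps\to0$.
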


\begin{proof}
In this case, for any $x\in [a]$ where $\inf\tau_a\in[\omega(n),\omega(n+1))$, for $C_\mu$ the Gibbs constant for $\mu$,
$$C_\mu^{-1}|F'(x)|^{-b^*}\le \mu\left(\left\{a:\inf\tau_a\in[\omega(n),\omega(n+1))\right\}\right)\le C_\mu K^2|F'(x)|^{-b^*},$$
by \hyperref[H2]{(H2)} and \Cref{rmk:Cconds}(2) we have for all $\eps>0$ small, $|F'(x)|^{-b^*}\asymp \frac{1}{n^{\kappa\beta+1\pm\eps}}$ and 
$$\mu_{q,b}\left(\left\{[a]:\inf\tau_a\in[\omega(n),\omega(n+1))\right\}\right)\lesssim e^{-q\omega(n)} |F'(x)|^{-b}\asymp e^{-q\omega(n)}n^{-(\kappa\beta+1\pm\eps)b/b^*}.$$
Therefore taking $\eps$ to 0 and the ratio of measures,
\begin{equation*}
    \frac{\mu\left(\left\{a:\inf\tau_a\in[\omega(n),\omega(n+1))\right\}\right)}{\mu_{q,b}\left(\left\{a:\inf\tau_a\in[\omega(n),\omega(n+1))\right\}\right)}\gtrsim e^{q\omega(n)}n^{-(\kappa\beta+1)(1-b/b^*)}=e^{q\omega(n)}\omega(n)^{-\beta'(b^*-b)},
\end{equation*}
where $\beta'=\frac{\beta+1/\kappa}{b^*}$. Similarly, since 
$$\mu_{q,b}\left(\left\{[a]:\inf\tau_a\in[\omega(n),\omega(n+1))\right\}\right)\gtrsim  e^{-q\omega(n)}n^{-(\kappa\beta+1\pm\eps)b/b^*},$$
one gets
$$\frac{\mu\left(\left\{a:\inf\tau_a\in[\omega(n),\omega(n+1))\right\}\right)}{\mu_{q,b}\left(\left\{a:\inf\tau_a\in[\omega(n),\omega(n+1))\right\}\right)}\lesssim e^{q\omega(n)}\omega(n)^{-\beta'(b^*-b)},$$
hence \hyperref[H3]{(H3)} holds with $\beta_{1,2}=\beta'$.
\end{proof}
 
 \begin{lemma}\label{lemma: branch count by H3}
 Suppose that both  \hyperref[H2]{(H2)} and \hyperref[H3]{(H3)} hold and we have $\omega$-comparable $K$-scaling.  Then if $\omega(n)$ is polynomial (of order $\kappa$), let $c_{1,2}=\beta_{1,2}b^*-\beta$
 $$\omega'(n)\omega(n)^{c_1-1}\lesssim\#\left\{a:\tau_a\in[\omega(n),\omega(n+1))\right\}\lesssim\omega'(n)\omega(n)^{c_2-1}.$$ 
 \end{lemma}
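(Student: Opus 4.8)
The strategy is to combine the two-sided Gibbs bound for $\mu$ with $\omega$-comparable $K$-scaling to convert the measure identity in \hyperref[H2]{(H2)} into a statement about derivatives $|F'(x)|$ on the relevant branches, and then to use \hyperref[H3]{(H3)} to pin down the size of those derivatives in terms of $\omega(n)$. First I would fix $n$ and pick, for each $a$ with $\inf\tau_a\in[\omega(n),\omega(n+1))$, a point $x_a\in[a]$; by $\omega$-comparable $K$-scaling all the values $|F'(x_a)|$ agree up to the multiplicative constant $K^{\pm1}$, so I may write $|F'(x_a)|=K^{\pm1}D_n$ for a single quantity $D_n$ depending only on $n$. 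Using the Gibbs property of $\mu$ (with constant $C_\mu$) and summing over the at least $1$, at most $\#\{a:\tau_a\in[\omega(n),\omega(n+1))\}$ such branches,
\[
C_\mu^{-1}K^{-b^*}\,\#\{a:\tau_a\in[\omega(n),\omega(n+1))\}\,D_n^{-b^*}\le \mu\left(\{a:\inf\tau_a\in[\omega(n),\omega(n+1))\}\right)\le C_\mu K^{b^*}\,\#\{a:\tau_a\in[\omega(n),\omega(n+1))\}\,D_n^{-b^*},
\]
so that $\#\{a:\tau_a\in[\omega(n),\omega(n+1))\}\asymp D_n^{b^*}\cdot\mu(\{a:\inf\tau_a\in[\omega(n),\omega(n+1))\})$, and \hyperref[H2]{(H2)} (together with \Cref{rmk:Cconds}(2), using that $\omega$ is polynomial of order $\kappa$, absorbing the slowly varying $\ell(\omega(n))$ into $\omega(n)^{\pm\eps}$ and letting $\eps\to0$) turns this into $\#\{a:\tau_a\in[\omega(n),\omega(n+1))\}\asymp D_n^{b^*}\,\omega'(n)\,\omega(n)^{-\beta-1}$.

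It remains to bound $D_n$. Here I would run exactly the computation in the proof of \Cref{lemma: H3 by uniform branch count} in reverse: by the Gibbs property for $\mu_{q,b}$ (which is the equilibrium state for $-q\tau-b\log|F'|$, hence Gibbs with some constant, and on a one-cylinder $[a]$ with $\inf\tau_a\in[\omega(n),\omega(n+1))$ satisfies $\mu_{q,b}([a])\asymp e^{-S_1(q\tau+b\log|F'|)(x_a)}=e^{\pm q\,\omega(n)\text{-order terms}}|F'(x_a)|^{-b}$ up to the $\text{var}_1$ contributions, which are bounded), summing over branches as above gives
\[
\mu_{q,b}\left(\{a:\inf\tau_a\in[\omega(n),\omega(n+1))\}\right)\asymp \#\{a:\tau_a\in[\omega(n),\omega(n+1))\}\,e^{-q\omega(n)}D_n^{-b}
\]
(with $e^{q\omega(n)}$ and $e^{q\omega(n+1)}$ interchangeable since $\omega$ is polynomial, by \Cref{rmk:Cconds}(3)). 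Dividing the two displayed asymptotics for $\mu$ and $\mu_{q,b}$ over the same branch family, the branch-count factors cancel and the $D_n$-powers combine to give
\[
\frac{\mu\left(\{a:\inf\tau_a\in[\omega(n),\omega(n+1))\}\right)}{\mu_{q,b}\left(\{a:\inf\tau_a\in[\omega(n),\omega(n+1))\}\right)}\asymp e^{q\omega(n)}\,D_n^{b-b^*}.
\]
Comparing with the two-sided bound \eqref{eqn: perturbations of acim} in \hyperref[H3]{(H3)} (after cancelling $e^{q\omega(n)}$, which is legitimate for polynomial $\omega$) forces $D_n^{b^*-b}\asymp \omega(n)^{\beta_{1,2}(b^*-b)}$ in the appropriate one-sided senses; since $b<b^*$ we may take $(b^*-b)$-th roots to get the one-sided bounds $\omega(n)^{\beta_1}\lesssim D_n\lesssim\omega(n)^{\beta_2}$ (the direction of each inequality matching the corresponding side of \eqref{eqn: perturbations of acim}). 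Substituting $D_n^{b^*}$ between $\omega(n)^{\beta_1 b^*}$ and $\omega(n)^{\beta_2 b^*}$ into $\#\{a:\tau_a\in[\omega(n),\omega(n+1))\}\asymp D_n^{b^*}\,\omega'(n)\,\omega(n)^{-\beta-1}$ and writing $c_{1,2}=\beta_{1,2}b^*-\beta$ yields
\[
\omega'(n)\,\omega(n)^{c_1-1}\lesssim \#\{a:\tau_a\in[\omega(n),\omega(n+1))\}\lesssim\omega'(n)\,\omega(n)^{c_2-1},
\]
which is the claim.

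\textbf{Main obstacle.} The routine parts (Gibbs sums over branches, absorbing $\ell$ and the $\eps$'s) are straightforward; the delicate point is justifying that one may cleanly extract $D_n$ from \eqref{eqn: perturbations of acim}. Two issues arise: first, \eqref{eqn: perturbations of acim} is only an asymptotic ($\lesssim$, not $=$) and its two sides carry $\omega(n)$ versus $\omega(n+1)$, so I must invoke $\frac{\omega(n+1)}{\omega(n)}\le c$ and \Cref{rmk:Cconds}(3) to collapse these — this is exactly why the hypothesis "$\omega$ polynomial of order $\kappa$" is needed, and I would be careful that the $e^{q\omega(n)}$ / $e^{q\omega(n+1)}$ discrepancy is also absorbed this way rather than silently. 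Second, taking $(b^*-b)$-th roots of an asymptotic inequality is only innocuous because the implied constants are uniform in $n$ (they depend on $q$, $b$, $C_\mu$, $K$ but not $n$); I would state this uniformity explicitly. A minor additional check is that $\mu_{q,b}$ genuinely has the Gibbs property with a constant independent of $n$ — this follows from \Cref{prop:pres} applied to $-q\tau-b\log|F'|$ for $b$ close to $b^*$ and $q\in[0,q_0)$, provided that potential has summable variations and finite pressure, which it does since $\tau\circ\pi$ and $\log|F'|\circ\pi$ do and $b>t^*$.
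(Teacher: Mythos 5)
Your argument is correct and takes essentially the same route as the paper's own proof: Gibbs estimates for $\mu$ and $\mu_{q,b}$ over the branch family at scale $n$, combined with the $\omega$-comparable $K$-scaling, reduce everything to a single derivative quantity per scale (your $D_n$; the paper's $a_n=D_n^{-b^*}$), which is then pinned down by (H3) and substituted back into the (H2) identity. The points you flag (uniformity of constants in $n$, and the $\omega(n)$ versus $\omega(n+1)$ and $e^{q\omega(n)}$ versus $e^{q\omega(n+1)}$ discrepancies) are treated in the paper exactly as you propose, via the polynomial hypothesis on $\omega$ and Remark~\ref{rmk:Cconds}(3).
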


 \begin{proof}
 Let $c_n:= \#\left\{a:\tau_a\in[\omega(n),\omega(n+1))\right\}$ and let $a_n:= |F'(x)|^{-b^*}$ for $x\in [a]$ for some $a\in \mathcal A$ with $\inf\tau_a\in[\omega(n),\omega(n+1))$.  Then our \hyperref[H2]{(H2)} implies that 
 \begin{equation}
 \label{eqn: H2 with comparable scaling}
 \frac{\omega'(n)}{\omega(n)^{\beta+1\pm\eps}}\asymp\mu\left(\left\{[a]:\inf\tau_a\in[\omega(n),\omega(n+1)\right\}\right) \asymp c_na_n
 \end{equation}
 Moreover, if $\omega(n)$ is polynomial,
 \begin{equation}
 e^{-q\omega(n+1)}c_na_n^{b/b^*}\lesssim\mu_{q,b}\left(\left\{[a]:\inf\tau_a\in[\omega(n),\omega(n+1))\right\}\right) \lesssim e^{-q\omega(n)} c_na_n^{b/b^*}.
 \label{eq:branchcount}
 \end{equation}
First by \hyperref[H3]{(H3)}, 
$$\frac{c_na_n}{e^{-q\omega(n)}c_na_n^{b/b^*}}\lesssim e^{q\omega(n+1)}\omega(n)^{-\beta_1(b^*-b)}, \ \text{\ie} a_n\lesssim{\omega(n)^{-\beta_1 b^*}},$$
putting this into \eqref{eqn: H2 with comparable scaling} and taking $\eps\to0$, we get 
\begin{equation*}c_n\gtrsim \omega'(n)\omega(n)^{\beta_1 b^*-\beta-1}.\end{equation*}
Similarly by \hyperref[H3]{(H3)} there is $a_n\gtrsim \omega(n)^{-\beta_2b^*}$, and put this into \eqref{eqn: H2 with comparable scaling} and taking $\eps\to0$ we get the upper bound
$c_n\lesssim \omega'(n)\omega(n)^{\beta_2b^*-\beta-1}$.
 \end{proof}

Note that above, the fact that $\omega(n)$ is polynomial was only used to ensure that \eqref{eq:branchcount} holds. If it were exponential, then the discrepancy between $e^{q\omega(n)}$ and $e^{q\omega}$ for $\omega\in (\omega(n), \omega(n+1))$ may cause issues.  But we shall see, for example in Example~\ref{ex:exponential} that this is not always the case.

\section{Real analyticity of the spectrum}
In this section we prove that despite the fact that the multifractal decomposition induced by the Birkhoff averages is extremely complicated (each level set is dense in the repeller), the function that encodes it, $b(\cdot)$, is as regular as possible (real analytic). In order to do so we make use of the associated  thermodynamic formalism. Similar results have been obtained  for other dynamical systems or different classes of observables (see, for example, \cite{IomJor15, MauUrb03, pes}).

Recall that $F$ restricted to $\Lambda$ is topologically conjugated to the  full-shift on $\mathcal A=\N$; abusing notations, $\sup_{x\in[a]}$ is to be understood as taking supremum over $x$ in $\pi[a]$. The following function is our key to prove \Cref{thm: main theorem} and the real analyticity of $b(\cdot)$:
\begin{equation}\label{eqn: def of p function}
    p:\R^3\to \R, \quad \hspace{3pt}p(\alpha,q,b):=P(q(\alpha-\tau)-b\log|F'|).
\end{equation}

Let $\mathcal S:=\left\{(q,b)\in\R^2:P(-q\tau-b\log|F'|)<\infty\right\}$ denote the set where the function $p$ is finite.

\begin{lemma}\label{lemma: parameter range of finite pressure}
We have that
$$\mathcal S=\left(\{0\}\times[b^*,\infty)\right)\cup\left((0,\infty)\times[0,\infty)\right),$$
where $b^*=\dim_H(\Lambda)$ is the unique solution to $P(-t\log|F'|)=0$ (which exists by assumption (F5)). 
\end{lemma}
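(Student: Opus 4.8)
The plan is to analyze the pressure function $p(0,q,b) = P(-q\tau - b\log|F'|)$ directly, using the monotonicity of pressure in each variable together with the two cases $q = 0$ and $q > 0$. First I would dispose of the $q = 0$ slice: here $p(0,0,b) = P(-b\log|F'|)$, which by assumption (F5) equals zero at $b = b^*$ and, since $-\log|F'| < 0$ (by (F2), $|F'| > A > 1$), is strictly decreasing in $b$; hence it is finite for all $b \geq b^*$ and, by \Cref{prop:pres} applied to $\psi = -\log|F'|$ with $t^* \le b^*$, it is $+\infty$ for $b < b^*$. This gives the slice $\{0\}\times[b^*,\infty)$ exactly and simultaneously identifies $b^*$ as the unique root of $P(-t\log|F'|) = 0$ (uniqueness from strict monotonicity).

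Next I would handle $q > 0$. Fix $q > 0$ and $b \geq 0$. Since $\tau \geq 0$ and $\log|F'| > 0$, the function $-q\tau - b\log|F'|$ is non-positive and of summable variations (as $\tau\circ\pi$ and $\log|F'|\circ\pi$ both are, by the hypotheses in Section~\ref{sec:obs} and (F4)). By \Cref{lemma: MauUrb}, $P(-q\tau - b\log|F'|) < \infty$ if and only if
\[
\sum_{a\in\mathcal A}\exp\left(\sup_{x\in[a]}\bigl(-q\tau(x) - b\log|F'(x)|\bigr)\right) < \infty.
\]
I would bound the $a$-th term above by $e^{-q\inf\tau_a}\cdot\sup_{x\in[a]}|F'(x)|^{-b} \le e^{-q\inf\tau_a}$ (using $b\ge 0$ and $|F'|>1$), then group the alphabet according to which dyadic-type block $[\omega(n),\omega(n+1))$ the value $\inf\tau_a$ falls in. By \hyperref[H1]{(H1)}, the block $n$ contains at most $\lesssim e^{\eps\omega(n)}$ symbols, each contributing at most $e^{-q\omega(n)}$, so the block sum is $\lesssim e^{(\eps - q)\omega(n)}$. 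Choosing $\eps < q$ and summing over $n$ (using $\omega(n)\to\infty$, and in fact the sub-geometric growth $\omega(n+1)/\omega(n)\le c$ ensures $\omega(n)$ grows at least linearly in $n$ so the series converges) gives finiteness. Thus $p(0,q,b) < \infty$ for all $q > 0$, $b \geq 0$, yielding the slab $(0,\infty)\times[0,\infty)$.

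Finally I would check there is nothing else: the set $\mathcal S$ is contained in $\{q \geq 0\}$ because for $q < 0$ the term $-q\tau$ is unbounded above on cylinders where $\tau$ is large (by (H1) such cylinders exist with $\inf\tau_a \to \infty$), forcing $\sup_{x\in[a]}(-q\tau(x) - b\log|F'(x)|)\to+\infty$ and hence divergence in \Cref{lemma: MauUrb}; and for $q = 0$, $b < b^*$ is excluded by the first paragraph. Combining the three pieces gives the claimed description of $\mathcal S$. I expect the main obstacle to be the $q > 0$ finiteness estimate — specifically making sure the interplay between the branch count bound \hyperref[H1]{(H1)} (which only gives $e^{\eps\omega(n)}$ for arbitrarily small $\eps$, not a fixed sub-exponential rate) and the decay $e^{-q\omega(n)}$ genuinely produces a convergent series; this is where the arbitrariness of $\eps$ in \hyperref[H1]{(H1)} is essential, and one must be careful that the implied constant in $\lesssim$ does not depend on $n$ in a way that spoils summability.
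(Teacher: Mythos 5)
Your $q>0$ half is essentially the paper's argument: the paper also invokes \Cref{lemma: MauUrb}, groups the alphabet into the blocks $\{a:\inf\tau_a\in[\omega(n),\omega(n+1))\}$, applies \hyperref[H1]{(H1)} with $\eps<q$, and then handles $b\ge0$ by monotonicity, $P(-q\tau-b\log|F'|)\le P(-q\tau)$, rather than by estimating the joint potential; these are the same computation. One caveat: your justification that $\sum_n e^{-(q-\eps)\omega(n)}<\infty$ because ``$\omega(n+1)/\omega(n)\le c$ ensures $\omega(n)$ grows at least linearly'' is backwards --- that hypothesis bounds the growth of $\omega$ from above, not below --- so the convergence of this series is an implicit lower-growth requirement on $\omega$ (the paper asserts it without comment too, and it holds in all the examples, where $\omega$ is polynomial or exponential).

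There are, however, two genuine gaps. First, for $q<0$ you claim that $\inf\tau_a\to\infty$ ``forces'' $\sup_{x\in[a]}\bigl(-q\tau(x)-b\log|F'(x)|\bigr)\to+\infty$. This ignores the term $-b\log|F'|$: the hypotheses control only the total $\mu$-measure of each block (\hyperref[H2]{(H2)}) and the number of cylinders in it (\hyperref[H1]{(H1)}), not the size of individual cylinders, so a cylinder $[a]$ with $\inf\tau_a\in[\omega(n),\omega(n+1))$ may have $\log|F'|\gg\omega(n)$ on it, in which case your supremum tends to $-\infty$ for $b>0$. The divergence is true, but your route does not prove it; the paper gets it in one line from the Variational Principle applied to the measure of maximal dimension $\mu$: by \hyperref[H2]{(H2)} the tail is non-summable, so $\int\tau\,d\mu=\infty$ while $\lambda(\mu)<\infty$, hence $h(\mu)+\int(-q\tau-b\log|F'|)\,d\mu=+\infty$ for every $q<0$ and every $b$. (A cylinder-level repair is possible --- in each block take the cylinder of largest $\mu$-measure and combine the Gibbs property of $\mu$ with \hyperref[H1]{(H1)} --- but it is considerably more work than the variational one-liner.) Second, for $q=0$ and $b<b^*$ your deduction is a non sequitur: \Cref{prop:pres} applied to $\psi=-\log|F'|$ gives $P(-b\log|F'|)=\infty$ only for $b<t^*$, and the inequality $t^*\le b^*$ (which is all that follows from $P(-b^*\log|F'|)=0$) says nothing about $b\in[t^*,b^*)$. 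What you need is $t^*\ge b^*$, i.e.\ the dichotomy \eqref{eqn: def of b*}, which the paper does not deduce from (F5) and monotonicity but imports from \cite[Theorem 3.1]{Iom05}. Your treatment of the remaining piece $q=0$, $b\ge b^*$ (monotonicity from $P(-b^*\log|F'|)=0$) is fine and matches the paper.
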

\begin{proof}
Suppose $q>0$.   By \hyperref[H1]{(H1)}, for $\eps\in (0, q)$,
\begin{equation*}\label{ineqn: sum of exp(sup)}
\sum_{a\in\mathcal A}\exp\left(\sup-q\tau_a(x)\right)\le\sum_{k\in\N}\sum_{\{a:\inf\tau_a\in[\omega(k),\omega(k+1))\}}\exp(-q\omega(k))
\lesssim\sum_{k\in \N} \exp\left(-(q-\eps)\omega(k)\right)< \infty,
\end{equation*}
 and by \Cref{lemma: MauUrb}, $P(-q\tau)<\infty$. Since $F$ is uniformly expanding, for all $b\ge0$  we have that $P(-q\tau-b\log|F'|)\le P(-q\tau)<\infty$.

Now assume that $q<0$, then for any $b\in\R$, the Variational Principle implies
\begin{equation*}
    P(-q\tau-b\log|F'|)\ge h(\mu)+\int\left(-q\tau-b\log|F'|\right)d\mu\ge-q\int \tau\,d\mu-b\lambda(\mu).
\end{equation*}
By assumption $\int\tau\,d\mu=\infty$, therefore $(-\infty,0)\times\R\not\in\mathcal S$.

Now for $q=0$, we assumed in Section \ref{sec:map}  that $P( -(\dim_H \Lambda) \log|F'|)=0$.
By \cite[Theorem 3.1]{Iom05}, 
\begin{equation}\label{eqn: def of b*}P(-b\log|F'|)= \left\{\begin{aligned}
    &\text{non positive} \hspace{3mm}& b>b^*;\\
    &\infty &b<b^*.
\end{aligned}
\right.\end{equation}
The result then follows.
\end{proof}

\begin{remark}\label{rmk: Gibbs measures for S}
Since $\sum_{j\ge1}\text{var}_j(\tau\circ\pi)$, $\sum_{j\ge1}\text{var}_j(F\circ\pi)<\infty$ and $(\Sigma,\sigma)$ is a full-shift by \cite[Theorem 1]{Sar03} for each $(q,b)\in\mathcal S$ there exists a unique equilibrium state for the potential $-q\tau-b\log|F'|$. Moreover,  it is a Gibbs measure. 
\end{remark}

Let $\underline\alpha=\inf\left\{\lim_{n\to\infty}\frac1n\sum_{j=0}^{n-1}\tau(F^jx):x\in\Lambda\right\}\in(0,\infty]$.
The dimension of the level sets is obtained by the following variational formula.

\begin{proposition} \label{thm: dim=sup of invariant measures} \cite[Theorem 3.1]{IomJor15} For each $\alpha>\underline\alpha$,
$$\dim_H(J(\alpha))=\sup\left\{\frac{h(\nu)}{\lambda(\nu)}:\nu\in\mathcal M_F,\lambda(\nu)<\infty,\int\tau\,d\nu=\alpha\right\}.$$
\end{proposition}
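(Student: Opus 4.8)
The plan is to establish the variational formula
\[
\dim_H(J(\alpha))=\sup\left\{\frac{h(\nu)}{\lambda(\nu)}:\nu\in\mathcal M_F,\ \lambda(\nu)<\infty,\ \int\tau\,d\nu=\alpha\right\}
\]
by proving the two inequalities separately, following the conditional variational principle developed by Iommi--Jordan.

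\textbf{Lower bound.} First I would show that every invariant measure $\nu$ with $\lambda(\nu)<\infty$ and $\int\tau\,d\nu=\alpha$ contributes a lower bound $h(\nu)/\lambda(\nu)$ to $\dim_H(J(\alpha))$. By the approximation property (\Cref{lemma: approximation property of pressure}) and a standard exhaustion by compact invariant subsystems $K\subset\Sigma$ (finitely many symbols), one reduces to the case where $\nu$ is supported on a compact sub-repeller; on each such piece, which is a conformal expanding set, the classical multifractal machinery of Pesin--Weiss (cf.\ \cite{pes}) applies and gives a measure $\nu_K$ concentrated near $\nu$ whose generic points lie in (a set close to) $J(\alpha)$ with Hausdorff dimension at least $h(\nu_K)/\lambda(\nu_K)$. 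One then needs that as $K$ exhausts $\Sigma$ one can choose $\nu_K\to\nu$ with $h(\nu_K)/\lambda(\nu_K)\to h(\nu)/\lambda(\nu)$ and $\int\tau\,d\nu_K\to\alpha$; since this perturbs the Birkhoff average slightly, one has to pass to a sequence $\alpha_K\to\alpha$ and invoke lower semicontinuity (or monotonicity) of $\alpha\mapsto\dim_H J(\alpha)$ near $\alpha$ to conclude. The Ruelle-type variational machinery and the Volume Lemma on the compact pieces do the heavy lifting here.

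\textbf{Upper bound.} For the reverse inequality I would use a covering argument. Fix $s>\dim_H(J(\alpha))$ is what we want to contradict; instead fix any invariant measure-free bound and cover $J(\alpha)$ efficiently. Concretely, for $x\in J(\alpha)$ and large $n$ the Birkhoff sum $S_n\tau(x)$ is close to $n\alpha$, so $x$ lies in a cylinder $[i_1\dots i_n]$ on which, using bounded distortion (assumptions (F2), (F4)) and summable variation of $\tau\circ\pi$, the quantity $\exp(q(n\alpha-S_n\tau)-b S_n\log|F'|)$ is comparable to $\mathrm{diam}([i_1\dots i_n])^b$ up to a factor $e^{o(n)}$. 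Summing over the relevant cylinders and using the definition of pressure, one gets that the $b$-dimensional (approximate) Hausdorff sum of these covers is controlled by $e^{n(p(\alpha,q,b)+o(1))}$. Hence whenever there exist $q\in\R$ and $b$ with $p(\alpha,q,b)\le 0$, we obtain $\dim_H(J(\alpha))\le b$; optimizing over such $(q,b)$ and matching with the thermodynamic characterisation of the supremum (Legendre-transform duality between pressure and entropy, \eqref{dp}) yields the claimed upper bound. One must take care that the relevant $(q,b)$ lie in $\mathcal S$ where the pressure is finite, which is exactly why $\alpha>\underline\alpha$ is imposed: it guarantees the constraint $\int\tau\,d\nu=\alpha$ is compatible with finite pressure.

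\textbf{Main obstacle.} The delicate point is the non-compactness of $\Sigma$ together with the non-integrability phenomena: the roof/observable $\tau$ is unbounded and the map has infinite entropy, so measures can escape to infinity and neither $h(\cdot)$ nor $\lambda(\cdot)$ nor $\int\tau\,d\cdot$ is continuous on $\mathcal M_F$. The lower bound therefore cannot be obtained by a single application of the compact theory but genuinely requires the approximation by compact invariant subsets (\Cref{lemma: approximation property of pressure}), plus an argument that the supremum over compactly supported measures with the constraint is not lost in the limit — this is where I expect the real work to be, and where I would lean most directly on \cite[Theorem 3.1]{IomJor15}, whose proof is precisely designed to handle this countable-Markov, non-integrable setting. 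Since the statement is quoted verbatim from that reference, the cleanest route is to verify that assumptions (F1)--(F5) and $\sum_j\mathrm{var}_j(\tau\circ\pi)<\infty$ are exactly the hypotheses under which that theorem is stated, and cite it; I would only reproduce the covering argument for the upper bound if a self-contained treatment were desired.
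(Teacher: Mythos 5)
Your proposal is correct and ultimately takes the same route as the paper: the paper gives no proof of this proposition at all, quoting it verbatim from \cite[Theorem 3.1]{IomJor15}, which is exactly the citation you settle on after checking that (F1)--(F5) and the summable-variation hypothesis on $\tau\circ\pi$ place us in that theorem's setting. Your sketched two-inequality argument (compact exhaustion for the lower bound, covering/pressure for the upper bound) is a reasonable outline of what that reference actually does, but none of it is needed here beyond the hypothesis check and the citation.
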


The following three lemmas and proposition set up the proof of Theorem~\ref{thm: main theorem} in the next section. 

\begin{lemma}\label{lemma: pressure is non-negative for correct b}
  If $\alpha>\underline\alpha$ and  $q>0$ then $p(\alpha,q,b(\alpha))\ge0$.
\end{lemma}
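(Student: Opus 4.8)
The plan is to show that $b(\alpha):=\dim_H J(\alpha)$ does not exceed the cut-off value
$$b_c:=b_c(\alpha,q):=\sup\left\{b\ge0:p(\alpha,q,b)\ge0\right\},$$
and then to conclude by monotonicity of $p(\alpha,q,\cdot)$. First I would record the relevant properties of the one-variable function $b\mapsto p(\alpha,q,b)$ on $[0,\infty)$ for our fixed $q>0$. It is finite there by \Cref{lemma: parameter range of finite pressure}. Using (F2), namely $\log|F'|\ge\log A>0$, together with $P(\psi-c)=P(\psi)-c$ for constants $c$, one gets for $0\le b_1<b_2$ that $P(-q\tau-b_2\log|F'|)\le P(-q\tau-b_1\log|F'|)-(b_2-b_1)\log A$, so $p(\alpha,q,\cdot)$ is strictly decreasing, and $p(\alpha,q,b)\le q\alpha+P(-q\tau)-b\log A\to-\infty$ as $b\to\infty$; moreover, being a supremum of functions affine in $b$ (via the Variational Principle) it is convex, hence continuous on $(0,\infty)$. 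Finally $p(\alpha,q,0)=P(q(\alpha-\tau))\ge h(\nu)\ge0$ for any $\nu\in\mathcal M_F$ with $\int\tau\,d\nu=\alpha$ (such $\nu$ exist since $\alpha>\underline\alpha$), so $b_c$ is a well-defined real number in $[0,\infty)$.

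Next I would prove $b(\alpha)\le b_c$. Fix $\nu\in\mathcal M_F$ with $\lambda(\nu)<\infty$ and $\int\tau\,d\nu=\alpha$. For every $b>b_c$ we have $p(\alpha,q,b)<0$ by definition of $b_c$, so the Variational Principle applied to the potential $q(\alpha-\tau)-b\log|F'|$ (whose integral against $\nu$ is $-b\lambda(\nu)>-\infty$) gives
$$0>p(\alpha,q,b)\ge h(\nu)+q\int(\alpha-\tau)\,d\nu-b\lambda(\nu)=h(\nu)-b\lambda(\nu),$$
using $\int(\alpha-\tau)\,d\nu=0$; since $\lambda(\nu)\ge\log A>0$ this yields $h(\nu)/\lambda(\nu)<b$. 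Letting $b\downarrow b_c$ gives $h(\nu)/\lambda(\nu)\le b_c$, and taking the supremum over all such $\nu$ and invoking \Cref{thm: dim=sup of invariant measures} gives $b(\alpha)\le b_c$.

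To finish: if $b(\alpha)<b_c$ then $p(\alpha,q,b(\alpha))>0$, because $p(\alpha,q,\cdot)$ is strictly decreasing and $p(\alpha,q,b)>0$ for every $b<b_c$ (again by monotonicity and the definition of $b_c$). If instead $b(\alpha)=b_c$, then either $b_c=0$, in which case $p(\alpha,q,b(\alpha))=p(\alpha,q,0)\ge0$ as established above, or $b_c>0$, in which case continuity of $p(\alpha,q,\cdot)$ on $(0,\infty)$ gives $p(\alpha,q,b_c)=\lim_{b\uparrow b_c}p(\alpha,q,b)\ge0$. In all cases $p(\alpha,q,b(\alpha))\ge0$, which is the claim.

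There is no deep difficulty here: the argument is the standard Legendre-transform upper bound for multifractal spectra, repackaged through the variational formula \Cref{thm: dim=sup of invariant measures}. The only points needing care are bookkeeping --- that $b\mapsto p(\alpha,q,b)$ is finite, strictly decreasing, convex (hence continuous on $(0,\infty)$) and tends to $-\infty$, all of which follow from (F2) and \Cref{lemma: parameter range of finite pressure}; the boundary case $b_c=0$; and the fact that for $\alpha>\underline\alpha$ there exist admissible measures at all. I expect the step demanding the most attention is simply ensuring the monotonicity and continuity facts about $p(\alpha,q,\cdot)$ are invoked correctly, so that $b_c$ genuinely plays the role of the zero of the pressure.
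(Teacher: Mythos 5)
Your argument is correct, but it takes a genuinely different route from the paper. The paper's proof takes, via \Cref{thm: dim=sup of invariant measures}, a maximizing sequence $(\mu_n)_n$ with $\int\tau\,d\mu_n=\alpha$ and $h(\mu_n)/\lambda(\mu_n)\to b(\alpha)$, proves that the Lyapunov exponents $\lambda(\mu_n)$ are uniformly bounded (using finiteness of $P(-q\tau-s_1\log|F'|)$ for $0<s_1<s_2<b(\alpha)$), and then passes to the limit in the Variational Principle lower bound $p(\alpha,q,b(\alpha))\ge \lambda(\mu_n)\bigl(h(\mu_n)/\lambda(\mu_n)-b(\alpha)\bigr)$. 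You instead fix $q$ and study the one-variable function $b\mapsto p(\alpha,q,b)$, introduce its cut-off $b_c$, prove $b(\alpha)\le b_c$ by the Legendre-type upper bound $h(\nu)/\lambda(\nu)<b$ for $b>b_c$, and conclude via strict monotonicity and left-continuity (from convexity and finiteness on $[0,\infty)$, which hold for the pressure here) at $b_c$. What your route buys: it bypasses the boundedness-of-$\lambda(\mu_n)$ step entirely, and it handles the degenerate case $b(\alpha)=0$ cleanly, which the paper's choice $0<s_1<s_2<b(\alpha)$ tacitly excludes; the cost is that you must invoke monotonicity, convexity and interior continuity of $b\mapsto P(-q\tau-b\log|F'|)$, standard facts for the Mauldin--Urba\'nski/Sarig pressure but not recorded in the paper, and, as in the paper, you implicitly use that measures $\nu$ with $\lambda(\nu)<\infty$ and $\int\tau\,d\nu=\alpha$ exist for $\alpha>\underline\alpha$. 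The paper's measure-sequence argument, on the other hand, keeps all tools within what is already set up (\Cref{lemma: parameter range of finite pressure} and the Variational Principle) and its bounded-$\lambda$ trick is of the same flavour as estimates reused later in the paper.
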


\begin{proof}
    The proof is similar to that of \cite[Lemma 4.3]{IomJor15}. It is a direct consequence of  \Cref{thm: dim=sup of invariant measures} that there exists a sequence of $F-$invariant probability measures $(\mu_n)_n$  such that  for all $n\in\N$, $\int\tau d\mu_n=\alpha$, $h(\mu_n),\lambda(\mu_n)<\infty$, and $\lim_{n\to\infty}h(\mu_n)/\lambda(\mu_n)=b(\alpha)$. Let $0<s_1<s_2<b(\alpha)$ and $q>0$, then by \Cref{lemma: parameter range of finite pressure}, for $K:=P(-q\tau-s_1\log|F'|)<\infty$, and by the Variational Principle, 
    $$h(\mu_n)-q\alpha-s_1\lambda(\mu_n)=h(\mu_n)+\int \left(-q\tau-s_1\log|F'| \right)\, d\mu_n\le K.$$
    For all $n$ large enough we have $s_2\le h(\mu_n)/\lambda(\mu_n)\le b(\alpha)$. Hence,  combining these we get for all $n$ large $\lambda(\mu_n)\le (K+q\alpha)/(s_2-s_1)<\infty$, so the sequence $\{\lambda(\mu_n)\}_n$ is bounded.
    
 By the Variational Principle, since $q>0$, we have that  for all $n\in\N$, 
 $$p(\alpha,q,b(\alpha))\ge h(\mu_n)+\int(q(\alpha-\tau)-b(\alpha)\log|F'|) \, d\mu_n.$$ 
 Hence, by our choice of $\{\mu_n\}_n$ and boundedness of $\{\lambda(\mu_n)\}_n$,
    \begin{equation*}p(\alpha,q,b(\alpha))
    \ge\lim_{n\to\infty} \left( h(\mu_n)+q\left(\alpha-\int\tau \, d\mu_n\right)-b(\alpha)\lambda(\mu_n) \right) \ge\lim_{n\to\infty}\left( \lambda(\mu_n)\left(\frac{h(\mu_n)}{\lambda(\mu_n)}-b(\alpha)\right) \right)=0.\qedhere
    \end{equation*}
   \end{proof}

\begin{lemma}\label{lemma: inf and divergence of pressure}
   For each $\alpha>\underline \alpha$ we have, that  $\inf\left\{p(\alpha,q,b(\alpha)):q>0\right\}=0$ and $\lim_{q\to\infty}p(\alpha,q,b(\alpha))=\infty$.
\end{lemma}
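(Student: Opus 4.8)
The plan is to obtain both statements from the behaviour of the map $q\mapsto p(\alpha,q,b(\alpha))$ on $(0,\infty)$, using Lemma~\ref{lemma: pressure is non-negative for correct b} for the lower bound $\ge0$ and the tail structure of $\tau$ (conditions \hyperref[H1]{(H1)}--\hyperref[H2]{(H2)} and the resulting polynomial tail \eqref{eqn: polynom tail}) for the two limiting behaviours. The key observation is that $p(\alpha,q,b)=P(q(\alpha-\tau)-b\log|F'|)=q\alpha+P(-q\tau-b\log|F'|)$, so everything reduces to understanding $q\mapsto P(-q\tau-b(\alpha)\log|F'|)$ and comparing $q\alpha$ with it.

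For the divergence $\lim_{q\to\infty}p(\alpha,q,b(\alpha))=\infty$: by the Variational Principle applied to a single well-chosen measure — for instance the Gibbs/equilibrium measure $\mu$ of maximal dimension, or better, any compactly supported $F$-invariant measure $\nu$ with $\lambda(\nu)<\infty$, $h(\nu)>0$ and $\int\tau\,d\nu<\alpha$ (such a $\nu$ exists because $\underline\alpha<\alpha$ and one may take an ergodic measure with average below $\alpha$, then truncate/approximate by a compactly supported one keeping its $\tau$-average strictly below $\alpha$) — one gets
\[
p(\alpha,q,b(\alpha))\ge h(\nu)+q\Bigl(\alpha-\int\tau\,d\nu\Bigr)-b(\alpha)\lambda(\nu),
\]
and since the coefficient of $q$ is strictly positive the right-hand side tends to $+\infty$ as $q\to\infty$. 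I would need to be slightly careful to produce a genuinely admissible $\nu$; if no invariant measure has $\tau$-average strictly below $\alpha$ then $\alpha=\underline\alpha$, contradicting $\alpha>\underline\alpha$, so this is fine, though one should check the measure can be taken compactly supported (via Lemma~\ref{lemma: approximation property of pressure}-type approximation) so that $\lambda(\nu)<\infty$ is automatic.

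For $\inf_{q>0}p(\alpha,q,b(\alpha))=0$: Lemma~\ref{lemma: pressure is non-negative for correct b} already gives $p(\alpha,q,b(\alpha))\ge0$ for all $q>0$, so it suffices to show the infimum is not bounded below by a positive constant, i.e. $\liminf_{q\to0^+}p(\alpha,q,b(\alpha))=0$. Here I would use the continuity (in fact real analyticity, from Proposition~\ref{prop:pres} / Remark~\ref{rmk: Gibbs measures for S}) of $q\mapsto P(-q\tau-b(\alpha)\log|F'|)$ on $(0,\infty)$ together with the fact that, as $q\to0^+$, $P(-q\tau-b(\alpha)\log|F'|)\to P(-b(\alpha)\log|F'|)$, which is $\le0$ since $b(\alpha)\ge$ (a value $>b^*$ is not guaranteed, but $b(\alpha)\le b^*$ always, and when $b(\alpha)<b^*$ the pressure $P(-b(\alpha)\log|F'|)=+\infty$, which is the delicate case). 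The clean way around the $b(\alpha)\le b^*$ subtlety: set $g(q):=P(-q\tau-b(\alpha)\log|F'|)$; this is convex, decreasing and finite on $(0,\infty)$, hence has a limit $L:=\lim_{q\to0^+}g(q)\in(-\infty,+\infty]$. Then $p(\alpha,q,b(\alpha))=q\alpha+g(q)$, so $\lim_{q\to0^+}p(\alpha,q,b(\alpha))=L$ if $L$ is finite, and the combination of $p\ge0$ with $p\to 0$ requires showing $L\le 0$. I expect the main obstacle to be precisely ruling out $L>0$ (equivalently, showing the infimum is actually attained near $0$ or tends to $0$): the argument should mirror \cite[Lemma 4.3]{IomJor15} by selecting, for each small $q$, a near-optimal measure $\nu_q$ for $-q\tau-b(\alpha)\log|F'|$ with $\int\tau\,d\nu_q$ close to $\alpha$ (using $h(\nu)/\lambda(\nu)$ near $b(\alpha)$ forces $\int\tau\,d\nu$ near $\alpha$ via Proposition~\ref{thm: dim=sup of invariant measures}), so that $p(\alpha,q,b(\alpha))\ge h(\nu_q)+q(\alpha-\int\tau\,d\nu_q)-b(\alpha)\lambda(\nu_q)$ with the bracketed term $o(1/q)$ and $h(\nu_q)-b(\alpha)\lambda(\nu_q)\to0$; combined with the reverse inequality $p(\alpha,q,b(\alpha))\ge0$ this pins the infimum at $0$. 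Putting the two halves together gives the statement.
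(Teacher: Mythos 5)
Your handling of the second statement is essentially the paper's argument: one produces an invariant measure with $\int\tau\,d\nu<\alpha$ and finite Lyapunov exponent (the paper takes $\tilde\nu$ on a periodic orbit inside a compact invariant set, via \cite[Lemma 3.2(2)]{IomJorTod17}) and lets the Variational Principle force $p(\alpha,q,b(\alpha))\to\infty$ as $q\to\infty$; apart from making the existence of such a measure precise, that half is fine.

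The first statement is where your proposal has a genuine gap, and the route you sketch cannot be repaired as stated. You reduce $\inf_{q>0}p(\alpha,q,b(\alpha))=0$ to showing $\liminf_{q\to0^+}p(\alpha,q,b(\alpha))=0$, i.e.\ to showing $L:=\lim_{q\to0^+}P(-q\tau-b(\alpha)\log|F'|)\le0$. But the infimum need not be approached as $q\to0^+$ at all, and in the cases of interest it is not: whenever $b(\alpha)<b^*$ one has $L=+\infty$, since by the approximation property (\Cref{lemma: approximation property of pressure}) $P(-q\tau-b(\alpha)\log|F'|)\ge h(\nu)-q\int\tau\,d\nu-b(\alpha)\lambda(\nu)$ for every compactly supported $\nu$, and the supremum of $h(\nu)-b(\alpha)\lambda(\nu)$ over such $\nu$ equals $P(-b(\alpha)\log|F'|)=\infty$ by \eqref{eqn: def of b*}. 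Showing $L\le0$ would thus amount to showing $b(\alpha)\ge b^*$, which is false in general (and is exactly what \Cref{thm: main theorem} quantifies the failure of); indeed the infimum is attained at an interior critical point $q(\alpha)>0$, and the proof of \Cref{lemma: root and partial derivative} actually derives a contradiction from the scenario $\lim_{q\to0^+}p(\alpha,q,b(\alpha))=0$. Your fallback (near-optimal measures $\nu_q$, mirroring \cite[Lemma 4.3]{IomJor15}) does not close this gap either, for two reasons: the Variational Principle only gives \emph{lower} bounds $p(\alpha,q,b(\alpha))\ge\cdots$, and combining two lower bounds cannot show the infimum is $\le0$; and the asserted implication that $h(\nu)/\lambda(\nu)$ near $b(\alpha)$ forces $\int\tau\,d\nu$ near $\alpha$ is not what \Cref{thm: dim=sup of invariant measures} says (it constrains only measures with $\int\tau\,d\nu$ exactly equal to $\alpha$), and monotonicity of $b(\cdot)$ is not yet available here. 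What is missing is the paper's contradiction mechanism: assuming $\inf_{q>0}p(\alpha,q,b(\alpha))=C>0$, one uses \cite[Lemma 3.2]{IomJorTod17} to find a compact invariant $M$ with $p_M(\alpha,\cdot,b(\alpha))>0$ everywhere and divergent as $|q|\to\infty$; analyticity of $p_M$ then yields $q_M$ with $\partial_q p_M=0$, so the corresponding equilibrium state $\mu_{q_M}$ has $\int\tau\,d\mu_{q_M}=\alpha$ while $\lambda(\mu_{q_M})\bigl(h(\mu_{q_M})/\lambda(\mu_{q_M})-b(\alpha)\bigr)=p_M(\alpha,q_M,b(\alpha))>0$, contradicting \Cref{thm: dim=sup of invariant measures}. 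Some device of this kind, producing a measure with $\tau$-average exactly $\alpha$ and dimension ratio exceeding $b(\alpha)$, is the missing idea in your argument.
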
 

\begin{proof}  
    Suppose $\inf\left\{p(\alpha,q,b(\alpha)):q>0\right\}=C>0$, then, using the idea of Lemma~\ref{lemma: approximation property of pressure},  \cite[Lemma 3.2]{IomJorTod17} implies that there exists a compact invariant set $M \subset \Lambda$ such that (i) $p_M(\alpha,q,b(\alpha))>0$ for all $q\in\R$ and (ii) $\lim_{|q|\to\infty}p_M(\alpha,q,b(\alpha))=\infty$, where $p_M(\alpha,q,b):=P_M(q(\alpha-\tau)-b\log|F'|)$.

    By analyticity and Ruelle's formula (see, for example, \cite[Proposition 2.6.13]{MauUrb03}), (ii) implies there exists $q_M$ such that 
    \[0=\left.\frac\partial{\partial q}\right|_{q=q_M}p_M(\alpha,q,b(\alpha))=\int(\alpha-\tau) \, d\mu_{q_M}\] where $\mu_{q_M}$ is the equilibrium state corresponding to $q_M(\alpha-\tau)-b(\alpha)\log|F'|$ for the restriction of $F$ to $M$. Therefore $\int \tau \, d\mu_{q_M}=\alpha$, but 
    \[0<p_M(\alpha,q_M,b(\alpha))=h(\mu_{q_M})-b(\alpha)\int \log|F'| \,d\mu_{q_M}=\lambda(\mu_{q_M})\left(\frac{h(\mu_{q_M})}{\lambda(\mu_{q_M})}-b(\alpha)\right),\]
    which is a contradiction to \Cref{thm: dim=sup of invariant measures}, so combining this with \Cref{lemma: pressure is non-negative for correct b}, $\inf\{p(\alpha,q,b(\alpha)):q>0\}=0$  where $p_M$ is the topological pressure restricted to $M$.
    
In order to prove the second statement, as in the proof of \cite[Lemma 3.2(2)]{IomJorTod17} with $\psi=1$, there exists a compact $F$-invariant set $M\subset \Lambda$ and a measure $\tilde\nu$ supported on the orbit of a periodic point $\tilde x\in M$ such that $\alpha>\int \tau \, d\tilde\nu $, then by the Variational Principle
\[\lim_{q\to\infty}p(\alpha,q,b(\alpha))\ge\lim_{q\to\infty}p_M(\alpha,q,b(\alpha))\ge \lim_{q\to\infty} \left(\int q(\alpha-\tau) \, d\tilde\nu-b(\alpha)\lambda(\tilde\nu) \right)=\infty.\qedhere\]
\end{proof}

\begin{lemma}\label{lemma: root and partial derivative}
    For all $\alpha>\underline{\alpha}$, there exists $q(\alpha)\in(0,\infty)$ such that 
    \begin{equation}
\left.\frac\partial{\partial q}\right|_{q=q(\alpha)}p(\alpha, q,b(\alpha))=0.
    \end{equation}
    In particular, $p(\alpha,q(\alpha),b(\alpha))=0.$
\end{lemma}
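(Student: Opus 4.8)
The plan is to exploit the convexity and smoothness of the pressure function $q\mapsto p(\alpha,q,b(\alpha))$ on the parameter range given by \Cref{lemma: parameter range of finite pressure}, together with the boundary behaviour established in \Cref{lemma: pressure is non-negative for correct b} and \Cref{lemma: inf and divergence of pressure}. First I would fix $\alpha>\underline\alpha$ and consider the one-variable function $\varphi(q):=p(\alpha,q,b(\alpha))=P(q(\alpha-\tau)-b(\alpha)\log|F'|)$ for $q\in(0,\infty)$. By \Cref{lemma: parameter range of finite pressure} (since $b(\alpha)=b^*\cdot(\text{something})\ge 0$, and in any case $q>0$ places us in the interior of $\mathcal S$) and \Cref{rmk: Gibbs measures for S}, $\varphi$ is finite and, by \Cref{prop:pres} and the standard real-analyticity of pressure for summable-variation potentials, it is real analytic on $(0,\infty)$; moreover $\varphi$ is convex in $q$, being a supremum of affine functions of $q$ via the Variational Principle.

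Next I would combine the three facts: (a) $\varphi(q)\ge 0$ for all $q>0$ by \Cref{lemma: pressure is non-negative for correct b}; (b) $\inf_{q>0}\varphi(q)=0$ by \Cref{lemma: inf and divergence of pressure}; and (c) $\lim_{q\to\infty}\varphi(q)=\infty$, again by \Cref{lemma: inf and divergence of pressure}. From (a) and (b) the infimum value $0$ is actually attained, provided the infimum is not approached only as $q\to 0^+$ or $q\to\infty$; (c) rules out $q\to\infty$, and I would rule out $q\to 0^+$ by noting that $\varphi$ is continuous on $(0,\infty)$ and that near $q=0$ the potential $q(\alpha-\tau)-b(\alpha)\log|F'|$ converges (in the appropriate sense) to $-b(\alpha)\log|F'|$ whose pressure is the value $P(-b(\alpha)\log|F'|)$; since $b(\alpha)=b^*$ is not forced here one must argue more carefully—actually the cleanest route is: because $\varphi$ is convex on $(0,\infty)$, nonnegative, and tends to $+\infty$ as $q\to\infty$, if its infimum were $0$ but not attained then by convexity $\varphi$ would be strictly decreasing on all of $(0,\infty)$, forcing $\lim_{q\to\infty}\varphi(q)=\inf\varphi=0$, contradicting (c). Hence the infimum is attained at some $q(\alpha)\in(0,\infty)$.

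At an interior minimiser of a real-analytic convex function, the derivative vanishes: $\left.\frac{\partial}{\partial q}\right|_{q=q(\alpha)}p(\alpha,q,b(\alpha))=0$. By Ruelle's derivative formula \eqref{dp} (applied to the one-parameter family $q\mapsto q(\alpha-\tau)-b(\alpha)\log|F'|$, using the unique Gibbs equilibrium state $\mu_{q(\alpha)}$ from \Cref{rmk: Gibbs measures for S}), this derivative equals $\int(\alpha-\tau)\,d\mu_{q(\alpha)}$, which therefore is zero. Finally, since the minimum value of $\varphi$ equals $\inf_{q>0}\varphi(q)=0$, we get $p(\alpha,q(\alpha),b(\alpha))=\varphi(q(\alpha))=0$, which is the last assertion.

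The main obstacle I anticipate is the bookkeeping around the endpoint $q\to 0^+$: one needs either a convexity argument (as sketched above) or a direct continuity/approximation statement to ensure that the infimum $0$ is genuinely attained at an interior point rather than only in a limit, and to confirm that $q(\alpha)>0$ strictly. A secondary technical point is justifying the interchange of differentiation and the application of Ruelle's formula in the non-compact countable-branch setting—here one invokes \Cref{prop:pres}, the finiteness of pressure on the interior of $\mathcal S$, and the fact that $q(\alpha)>t^*$ so that the analyticity regime of \eqref{dp} applies; alternatively one can run the argument on compact invariant subsets and pass to the limit as in \Cref{lemma: inf and divergence of pressure}.
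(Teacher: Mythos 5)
There is a genuine gap at the crucial step, namely ruling out that the infimum $0$ is approached only as $q\to 0^+$. Your claim that ``if the infimum were $0$ but not attained then by convexity $\varphi$ would be strictly decreasing on all of $(0,\infty)$'' is false: a convex, nonnegative, real-analytic function with $\inf_{q>0}\varphi(q)=0$ and $\varphi(q)\to\infty$ as $q\to\infty$ need not attain its infimum --- take $\varphi(q)=q$ on $(0,\infty)$. Non-attainment can occur at the open left endpoint, i.e.\ $\varphi$ strictly increasing with $\varphi(q)\to 0$ as $q\to 0^+$, and facts (a), (b), (c) together with convexity and analyticity simply do not exclude this. This is exactly the scenario the paper has to work to eliminate, and it is where the standing hypotheses on the observable enter: assuming no interior critical point (so that $p(\alpha,\cdot,b(\alpha))$ is increasing and tends to $0$ at $0^+$), the paper uses the Gibbs property of the equilibrium states $\nu_n$ for $\tfrac1n(\alpha-\tau)-b(\alpha)\log|F'|$ (with uniform Gibbs constants), condition (H2) and the tail estimate of \Cref{rmk:Cconds} to show that
$\left.\frac{\partial}{\partial q}\right|_{q=1/n}p(\alpha,q,b(\alpha))=\int(\alpha-\tau)\,d\nu_n\lesssim -k^{1-\beta}$ for every large $k$; since $\beta<1$ this is arbitrarily negative, contradicting monotone increase near $0$. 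In other words, the existence of $q(\alpha)>0$ is not a soft consequence of convexity plus \Cref{lemma: pressure is non-negative for correct b} and \Cref{lemma: inf and divergence of pressure}; it genuinely uses $\int\tau\,d\mu=\infty$ (the non-summable polynomial tail), which forces the slope $\alpha-\int\tau\,d\nu_q$ to blow down to $-\infty$ as $q\to0^+$.

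Your fallback remark about continuity at $q=0$ towards the potential $-b(\alpha)\log|F'|$ does not repair this either: by \eqref{eqn: def of b*} one could conclude $\liminf_{q\to0^+}p(\alpha,q,b(\alpha))=\infty$ only if one already knew $b(\alpha)<b^*$ strictly, but strict monotonicity of $b(\cdot)$ and $b(\alpha)<b^*$ are only established later (in \Cref{lemma: q b are analytic}), whose proof in turn uses this lemma; so that route is circular at this stage. Once existence of an interior critical point is secured, the rest of your argument is fine (a critical point of the convex function is a global minimiser, whose value is the infimum $0$, and Ruelle's formula \eqref{dp} applies there); note the paper instead concludes $p(\alpha,q(\alpha),b(\alpha))\le 0$ from $\int\tau\,d\mu_\alpha=\alpha$ and \Cref{thm: dim=sup of invariant measures}, and then invokes \Cref{lemma: pressure is non-negative for correct b}, but that difference is cosmetic compared with the missing existence argument.
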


\begin{proof}
We first show that the $q(\alpha)$ exists and is strictly positive. By \Cref{lemma: parameter range of finite pressure,lemma: inf and divergence of pressure}, for $q_0$ large,
\[\left.\frac\partial{\partial q}\right|_{q=q_0}p(\alpha,q,b(\alpha))>0.\]
Suppose there is no $q(\alpha)>0$ such that the partial derivative with respect to $q$ is 0. Then $p(\alpha,q,b(\alpha))$ is strictly increasing on $(0,\infty)$: if there exists an open set on which the pressure is decreasing then the derivative is negative, by analyticity and strict convexity of the pressure when it is finite, there exists a zero for derivative. Therefore by \Cref{lemma: inf and divergence of pressure}, $\lim_{q\to0^+}p(\alpha,q,b(\alpha))=\inf\left\{p(\alpha,q,b(\alpha):q>0\right\}=0$. 

Again by \Cref{lemma: parameter range of finite pressure} $p(\alpha,1/n,b(\alpha))<\infty$ which implies for each $n$ there is a unique equilibrium (Gibbs)  state $\nu_n$ with respect to the potential $\frac1n(\alpha-\tau)-b(\alpha)\log|F'|$, and as $var_j(\tau/n)\le var_j(\tau)$ for each $j\ge1$, these Gibbs measures share the same Gibbs constant.

Also by assumption \hyperref[H2]{(H2)} and \Cref{rmk:Cconds}, finite 1 variations of $F'\circ\pi$ and the Gibbs property of $\mu$, for $k\in\N$ large enough
\begin{equation}\label{eqn: measure and distortion}
    k^{-\beta}\asymp\mu(\tau>k)\lesssim \sum_{n\in\N:p(n)>k}\sum_{\left\{a:\,\inf\tau_a\in[p(n),p(n+1))\right\}}\sup_{x\in[a]}|F'(x)|^{-b^*}.
\end{equation}
Hence for all $n\in\N$, by Ruelle's formula, for all $k$ large enough, 
\begin{align*}
      &\left.\frac\partial{\partial q}\right|_{q=1/n}p(\alpha,q,b(\alpha))=\int(\alpha-\tau)\, d\nu_n=\alpha+\sum_{a\in\mathcal A}\int_{[a]}-\tau\,d\nu_n\le\alpha+\sum_{a\in\mathcal A}-\inf\tau_a\nu_n([a])\\
      &\lesssim \sum_{a\in\mathcal A}-\inf\tau_a\sup_{x\in[a]}\exp\left(\frac{\alpha-\tau(x)}{n}-b(\alpha)\log|F'(x)|\right)\le \sum_{a\in\mathcal A}-\inf\tau_a\sup_{x\in[a]}{\exp\left(\frac{\alpha-\tau(x)}{n}\right)}{|F'(x)|^{-b^*}}\\\
      &\lesssim-k\exp\left(\frac{(\alpha-k)}{n}-p\left(\alpha,\frac1n,b(\alpha)\right)\right)\sum_{j\in\N:\omega(j)\ge k}\sum_{\left\{a\in\mathcal{A}:\inf\tau_a\in[ \omega(j),\omega(j+1))\right\}}\mu([a])\\
      &\lesssim -k^{1-\beta}\exp\left(\frac{(\alpha-k)}{n}-p\left(\alpha,\frac1n,b(\alpha)\right)\right),
\end{align*}
where the second to the last asymptotic inequality is by \hyperref[H2]{(H2)} and \Cref{rmk:Cconds}. By continuity of the pressure function and $p(\alpha,1/n,b(\alpha))\to0$, for each all $n,k\in\N$ large enough
\[\left.\frac\partial{\partial q}\right|_{q=1/n}p(\alpha,q,b(\alpha))\lesssim -k^{1-\beta}<0,\]
which can be arbitrary large, implying there exists a neighbourhood to the right of 0 such that the derivative is negative, again this is a contradiction. Hence there exists $q(\alpha)>0$ where the derivative of the pressure functional with respect to $q$ is zero.
 
Now we prove the pressure at $q(\alpha)>0$ is also zero. Let $\mu_{\alpha}$ denote the equilibrium Gibbs state for the potential $q(\alpha)(\alpha-\tau)-b(\alpha)\log|F'|$. We have shown already that
    \[\left.\frac\partial{\partial q}\right|_{q=q(\alpha)}p(\alpha,q,b(\alpha))=\int(\alpha-\tau) \,  d\mu_{\alpha}=0,\]
in other words, $\int\tau\,d\mu_{\alpha}=\alpha$, so by \Cref{thm: dim=sup of invariant measures}, 
    \[p(\alpha,q(\alpha),b(\alpha))=\left(\frac{h(\mu_{\alpha})}{\lambda(\mu_{\alpha})}-b(\alpha)\right)\lambda(\mu_\alpha)\le0.\]
Combining this inequality with \Cref{lemma: pressure is non-negative for correct b}, $p(\alpha,q(\alpha),b(\alpha))=0$.
\end{proof}

\begin{proposition}\label{lemma: q b are analytic}
    The functions $b(\alpha)$ and $q(\alpha)$ are real-analytic with respect to $\alpha\in(0,\infty)$, and $b(\alpha)$ is strictly increasing with $\lim_{\alpha\to\infty}b(\alpha)=dim_H(\Lambda)=b^*$.
\end{proposition}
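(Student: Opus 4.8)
The plan is to deduce everything from the implicit function theorem applied to the equation $p(\alpha,q,b)=0$ together with the already-established facts that, for each $\alpha>\underline\alpha$, there is a pair $(q(\alpha),b(\alpha))$ with $p(\alpha,q(\alpha),b(\alpha))=0$ and $\partial_q p(\alpha,q(\alpha),b(\alpha))=0$. First I would record the relevant derivatives of $p$ at such a point. By Ruelle's formula and analyticity of the pressure on the interior of $\mathcal S$ (Proposition~\ref{prop:pres} and the first-derivative formula \eqref{dp}, applied to the full-shift via \hyperref[H1]{(H1)}), one has $\partial_q p=\int(\alpha-\tau)\,d\mu_\alpha$, $\partial_b p=-\int\log|F'|\,d\mu_\alpha=-\lambda(\mu_\alpha)<0$, and $\partial_\alpha p=q\int d\mu_\alpha=q(\alpha)>0$, where $\mu_\alpha$ is the (unique, Gibbs) equilibrium state for $q(\alpha)(\alpha-\tau)-b(\alpha)\log|F'|$ coming from Remark~\ref{rmk: Gibbs measures for S}. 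The second-order information comes from strict convexity of the pressure: $\partial_{qq}p$ is the variance of $\alpha-\tau$ with respect to $\mu_\alpha$, which is strictly positive unless $\tau$ is cohomologous to a constant — a case we can exclude since $\int\tau\,d\mu=\infty$ while $\int\tau\,d\mu_\alpha=\alpha<\infty$, so $\tau$ is not cohomologous to a constant with respect to any Gibbs measure in this family.

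Next I would set up the system $G(\alpha,q,b)=(p(\alpha,q,b),\partial_q p(\alpha,q,b))=(0,0)$ as a map of the two unknowns $(q,b)$, depending on the parameter $\alpha$. Its Jacobian in $(q,b)$ at a solution is
\[
\begin{pmatrix}\partial_q p & \partial_b p\\ \partial_{qq}p & \partial_{qb}p\end{pmatrix}
=\begin{pmatrix}0 & -\lambda(\mu_\alpha)\\ \partial_{qq}p & \partial_{qb}p\end{pmatrix},
\]
whose determinant is $\lambda(\mu_\alpha)\,\partial_{qq}p>0$, using $\lambda(\mu_\alpha)<\infty$ (which holds because $\mu_\alpha$ is an equilibrium state with $p(\alpha,\cdot)>-\infty$, hence of finite Lyapunov exponent) together with the strict positivity of the variance just discussed. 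Since $p$ is real-analytic on a neighbourhood of $(\alpha,q(\alpha),b(\alpha))$ in the interior of the finiteness region (which is open in $\R^3$ by Lemma~\ref{lemma: parameter range of finite pressure} and Proposition~\ref{prop:pres}), the analytic implicit function theorem gives real-analytic functions $\alpha\mapsto(q(\alpha),b(\alpha))$ locally; uniqueness of the pair identified in Lemmas~\ref{lemma: root and partial derivative} ensures these local branches patch together to the globally-defined $q(\cdot),b(\cdot)$ on $(0,\infty)$, proving analyticity.

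For monotonicity, differentiate the two identities $p(\alpha,q(\alpha),b(\alpha))=0$ and $\partial_q p(\alpha,q(\alpha),b(\alpha))=0$ in $\alpha$. The second one, after using $\partial_q p=0$, gives $\partial_{q\alpha}p+q'(\alpha)\partial_{qq}p+b'(\alpha)\partial_{qb}p=0$; the first, using $\partial_q p=0$ again, collapses to $\partial_\alpha p+b'(\alpha)\partial_b p=0$, i.e. $q(\alpha)-b'(\alpha)\lambda(\mu_\alpha)=0$, so $b'(\alpha)=q(\alpha)/\lambda(\mu_\alpha)>0$ since $q(\alpha)>0$ by Lemma~\ref{lemma: root and partial derivative}. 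Hence $b$ is strictly increasing, and being bounded above by $\dim_H\Lambda=b^*$ (because $b(\alpha)=\dim_H J(\alpha)\le\dim_H\Lambda$ by Proposition~\ref{thm: dim=sup of invariant measures} and monotonicity of Hausdorff dimension), it has a limit $L:=\lim_{\alpha\to\infty}b(\alpha)\le b^*$. To see $L=b^*$, suppose $L<b^*$. Then for all large $\alpha$, $b(\alpha)\le L<b^*$, so by Lemma~\ref{lemma: parameter range of finite pressure}/\eqref{eqn: def of b*}, $P(-b(\alpha)\log|F'|)\ge P(-L\log|F'|)=:\delta$, and one can compare: using $P(-b(\alpha)\log|F'|)>0$, pick a compactly-supported measure $\nu$ with $h(\nu)-b^*\lambda(\nu)$ close to $0$ (possible since $\mu$ realises pressure $0$ at $b^*$ and by the approximation property Lemma~\ref{lemma: approximation property of pressure}); then $h(\nu)/\lambda(\nu)$ is close to $b^*$, and since $\int\tau\,d\nu<\infty$, Proposition~\ref{thm: dim=sup of invariant measures} forces $b(\int\tau\,d\nu)\ge h(\nu)/\lambda(\nu)$, which exceeds $L$ for a suitable choice of $\nu$ — contradicting $\sup_\alpha b(\alpha)=L$. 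Therefore $L=b^*$.

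The main obstacle I expect is the very last step: showing the supremum is exactly $b^*$ rather than merely $\le b^*$. It requires producing, for each $\eta>0$, a compactly-supported invariant measure $\nu$ with finite $\int\tau\,d\nu$ and $h(\nu)/\lambda(\nu)>b^*-\eta$; this is exactly where the approximation property of the pressure (Lemma~\ref{lemma: approximation property of pressure}) and the fact that the measure of maximal dimension $\mu$ attains zero pressure for $-b^*\log|F'|$ (assumption (F5)) must be combined carefully, since the approximating compactly-supported measures automatically have bounded $\tau$ but one must ensure their dimension ratios $h/\lambda$ still converge to $b^*$. The other derivative computations, by contrast, are routine once the non-cohomologous-to-a-constant observation is in place.
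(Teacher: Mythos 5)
Your proof of analyticity and strict monotonicity is essentially the paper's: the same map $G=(p,\partial_q p)$ viewed as a function of $(q,b)$ with parameter $\alpha$, the same $2\times2$ Jacobian with determinant $\lambda(\mu_\alpha)\,\partial_{qq}p>0$, the analytic implicit function theorem, and the identity $b'(\alpha)=q(\alpha)/\lambda(\mu_\alpha)>0$ exactly as in \eqref{eqn: b' alpha}. Where you genuinely differ is the limit $\lim_{\alpha\to\infty}b(\alpha)=b^*$: the paper's proof stops at monotonicity and leaves that claim implicit, whereas you derive it from the approximation property (Lemma~\ref{lemma: approximation property of pressure}) combined with (F5): since $P(-b^*\log|F'|)=0$, there are compactly supported $\nu$ with $h(\nu)-b^*\lambda(\nu)>-\eps$, and the uniform expansion bound $\lambda(\nu)\ge\log A$ gives $h(\nu)/\lambda(\nu)>b^*-\eps/\log A$ with $\int\tau\,d\nu<\infty$, so Proposition~\ref{thm: dim=sup of invariant measures} forces $\sup_\alpha b(\alpha)=b^*$. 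That argument is correct and usefully fills in a step the paper glosses over. Two small points: for $L<b^*$, equation \eqref{eqn: def of b*} gives $P(-L\log|F'|)=\infty$, not a finite positive $\delta$, but that clause is extraneous to your contradiction; and you should note (or arrange, e.g.\ by mixing in a periodic measure with large $\tau$-average) that the approximating $\nu$ has $\int\tau\,d\nu>\underline\alpha$ so that Proposition~\ref{thm: dim=sup of invariant measures} applies at that level. Your justification of $\partial_{qq}p>0$ (the $\mu_\alpha$-integrals $\int\tau\,d\mu_\alpha=\alpha$ vary, so $\tau$ is not cohomologous to a constant) is at the same level of detail as the paper's appeal to strict convexity, and is sound.
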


\begin{proof}
    For analyticity, the method provided in \cite[Lemma 4.5]{IomJor15} allows us to conclude that the Jacobian matrix (with respect to $q,b$) of the mapping
\[G:\R\times \R^2\to \R^2, \hspace{3mm}G(\alpha,q,b)=\left(\begin{array}{c}
    p(\alpha,q,b)\\\frac{\partial}{\partial q}p(\alpha,q,b)
\end{array}\right)\]
  evaluated at $q=q(\alpha)$, $b=b(\alpha)$ has strictly positive determinant, for all $\alpha>\underline{\alpha}$. Hence, analyticity of $q(\alpha)$ and $b(\alpha)$ follows from the Analytic Implicit Function Theorem because the pressure function is analytic when it is finite. Moreover,
\begin{equation*}
    \left(\begin{array}{c}q'(\alpha)\\b'(\alpha)\end{array}\right)=-\left(\begin{array}{c}\frac{\partial p}{\partial q}\\\frac{\partial^2p}{\partial q^2}\end{array}
    \begin{array}{c}\frac{\partial p}{\partial b}\\\frac{\partial^2 p}{\partial b\partial q}\end{array}\right)^{-1}\left(\begin{array}{c}\frac{\partial p}{\partial \alpha}\\\frac{\partial^2 p}{\partial \alpha\partial q}\end{array}\right)
\end{equation*}
evaluated at $\alpha, q(\alpha),b(\alpha)$. Since  $\frac{\partial p}{\partial q}=0$ at $q(\alpha)$, $\frac{\partial^2 p}{\partial q^2}$ is strictly positive by convexity of pressure function, and $\left.\frac{\partial p}{\partial b}\right|_{\alpha,q(\alpha),b(\alpha)}=-\int|\log|F'|d\mu_\alpha$ where $\mu_\alpha$ is the Gibbs measure for $q(\alpha)(\alpha-\tau)-b(\alpha)\log|F'|$, we have
\begin{equation}\label{eqn: b' alpha}
    b'(\alpha)=\left({\frac{\partial^2 p}{\partial q^2}\cdot\frac{\partial p}{\partial b}}\right)^{-1}\left(-\frac{\partial^2 p}{\partial q^2}q(\alpha)+\frac{\partial p}{\partial q}\frac{\partial^2p}{\partial\alpha\partial q}\right)=q(\alpha)/\lambda(\mu_\alpha).
\end{equation}

 Since $F$ is uniformly expanding, $\lambda(\mu_\alpha)>0$, and $q(\alpha)>0$ by \Cref{lemma: root and partial derivative}, $b(\alpha)$ is strictly increasing.
\end{proof}

\section{Proof of Theorem~\ref{thm: main theorem}}

\begin{proposition}\label{prop: alpha q alpha goes to 0} The following holds, 
    \[\lim_{\alpha\to\infty}q(\alpha)=0 \quad \text{ and } \quad \lim_{\alpha\to\infty}\alpha q(\alpha)=0.\]
\end{proposition}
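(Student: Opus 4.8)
The plan is to exploit the key identity $p(\alpha,q(\alpha),b(\alpha))=0$ from \Cref{lemma: root and partial derivative} together with the monotonicity $\lim_{\alpha\to\infty}b(\alpha)=b^*$ from \Cref{lemma: q b are analytic}, and to extract a contradiction from the assumption that $q(\alpha)$ (or $\alpha q(\alpha)$) does not tend to $0$. The main tool will be the variational characterisation of the pressure combined with the tail estimate $\mu(\tau>n)\asymp n^{-\beta}$ and, more precisely, the Gibbs property of $\mu$ against the cylinder decomposition organised by the level sets $\{a:\inf\tau_a\in[\omega(k),\omega(k+1))\}$ as in \hyperref[H2]{(H2)}.

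First I would record that $q(\alpha)$ is monotone in $\alpha$: since $b'(\alpha)=q(\alpha)/\lambda(\mu_\alpha)>0$ and, by real analyticity and the uniform behaviour of the pressure, one expects $q(\cdot)$ to be eventually monotone decreasing; at the very least $q(\alpha)$ is bounded (if $q(\alpha)\to\infty$ along a subsequence one gets a contradiction directly from $p(\alpha,q(\alpha),b(\alpha))=0$ using that $-\tau\le 0$ on all but finitely many branches and \Cref{lemma: MauUrb}, since a large $q$ makes the sum $\sum_a\exp(\sup(q(\alpha-\tau_a)-b(\alpha)\log|F'|))$ dominated by finitely many branches where $\alpha-\tau_a>0$, forcing the pressure to $+\infty$ as $\alpha\to\infty$). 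So suppose for contradiction that $q(\alpha)\ge q_\infty>0$ along a sequence $\alpha_m\to\infty$. Using \Cref{lemma: MauUrb} and \hyperref[H1]{(H1)}--\hyperref[H2]{(H2)}: the condition $p(\alpha,q(\alpha),b(\alpha))=0$ is equivalent to $\sum_a\exp(\sup_{[a]}(q(\alpha)(\alpha-\tau)-b(\alpha)\log|F'|))$ being comparable to $1$ (by the Gibbs property of the equilibrium state and the approximation property). Splitting the alphabet into the level sets indexed by $k$, the branches with $\omega(k)<\alpha$ contribute a term behaving like $\sum_{k:\omega(k)<\alpha}e^{q(\alpha)(\alpha-\omega(k))}\mu(\{a:\inf\tau_a\in[\omega(k),\omega(k+1))\})$, which by \hyperref[H2]{(H2)} is $\asymp\sum_{k:\omega(k)<\alpha}e^{q(\alpha)(\alpha-\omega(k))}\ell(\omega(k))\omega'(k)\omega(k)^{-\beta-1}$; the dominant contribution comes from $\omega(k)$ near $0$, giving roughly $e^{q(\alpha)\alpha}$ times a bounded factor (after using $\text{var}_1<\infty$ to pass between $\sup$ and $\inf$, and the Gibbs constant of $\mu$). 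Since $q(\alpha)\ge q_\infty$ and $\alpha\to\infty$, this forces the pressure to be strictly positive — indeed $+\infty$ — contradicting $p(\alpha,q(\alpha),b(\alpha))=0$.

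For the stronger statement $\alpha q(\alpha)\to0$, once we know $q(\alpha)\to0$ we need a finer estimate. The idea is the same: from $p(\alpha,q(\alpha),b(\alpha))=0$, using the Gibbs property of $\mu$ and \hyperref[H2]{(H2)} one gets a two-sided bound of the form
\[1\asymp \sum_{k\in\N}e^{\pm q(\alpha)(\alpha-\omega(k))}\,\ell(\omega(k))\frac{\omega'(k)}{\omega(k)^{\beta+1}}\,\omega(k)^{\mp\beta_i(b^*-b(\alpha))},\]
where I would feed in \hyperref[H3]{(H3)} to compare the equilibrium state for $q(\alpha)(\alpha-\tau)-b(\alpha)\log|F'|$ against $\mu$. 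The presence of the factor $e^{q(\alpha)\alpha}$ in the $k$-small terms shows that if $q(\alpha)\alpha\not\to0$ then the sum is bounded below by a fixed constant strictly greater than the convergent tail part alone; more carefully, one isolates the smallest few branches (finitely many $a$ with $\inf\tau_a$ near $\underline\alpha$ bounded), whose contribution is $\asymp e^{q(\alpha)(\alpha-\inf\tau_a)}$ up to the Gibbs constant — and $\alpha-\inf\tau_a\ge\alpha/2$ for $\alpha$ large, so this term is $\gtrsim e^{q(\alpha)\alpha/2}$, which must stay bounded, forcing $q(\alpha)\alpha\to0$.

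The main obstacle I anticipate is making the "dominant contribution comes from small $k$" step rigorous: one must control the full sum $\sum_k e^{q(\alpha)(\alpha-\omega(k))}\ell(\omega(k))\omega'(k)\omega(k)^{-\beta-1}$ uniformly in $\alpha$, which requires splitting at $\omega(k)\approx\alpha$ (below that the exponential is $\ge1$, above it decays) and carefully tracking how the slowly varying $\ell$ and the polynomial/exponential growth of $\omega$ interact with $q(\alpha)\to0$; in particular one needs that the pressure, which by \Cref{lemma: root and partial derivative} is exactly $0$, cannot be reconciled with $e^{q(\alpha)\alpha}$ remaining bounded away from $1$. A secondary technical point is justifying the passage between $P(\cdot)=0$ and the comparability of the associated Gibbs sum to $1$; this is standard via the Gibbs property (Remark after \Cref{prop:pres} and \Cref{rmk: Gibbs measures for S}) and the finiteness of $\text{var}_1(\tau\circ\pi)$ and $\text{var}_1(\log|F'|\circ\pi)$, but needs to be invoked cleanly.
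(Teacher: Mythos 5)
Your treatment of the first limit is essentially sound, and it takes a slightly different route from the paper: from $p(\alpha,q(\alpha),b(\alpha))=0$ (\Cref{lemma: root and partial derivative}) and the full-shift lower bound $P(\psi)\ge \inf_{[a_0]}\psi$ for one fixed branch $a_0$ on which $\tau$ and $\log|F'|$ are bounded (equivalently, the fixed-point measure in the Variational Principle), you get $0\ge q(\alpha)(\alpha-C)-b^*\sup_{[a_0]}\log|F'|$, which rules out both $\limsup q(\alpha)\in(0,\infty)$ and $\limsup q(\alpha)=\infty$ at once; the paper instead uses continuity of $(q,b)\mapsto P(-q\tau-b\log|F'|)$ on $\mathcal S$ for the finite case and a periodic-orbit measure for the infinite case, so your version is acceptable (and arguably more uniform), provided you state the one-cylinder lower bound for the pressure cleanly rather than through \Cref{lemma: MauUrb}, which only characterises finiteness.

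The second limit, however, has a genuine gap. Your normalisation gives only $1\asymp\sum_a\exp\bigl(\sup_{[a]}(q(\alpha)(\alpha-\tau)-b(\alpha)\log|F'|)\bigr)$ up to \emph{fixed} multiplicative constants (Gibbs/distortion constants coming from $\text{var}_1$), so isolating the smallest branches yields an inequality of the form $c_0\,e^{q(\alpha)\alpha}\le C_1$, i.e.\ only $\limsup_{\alpha\to\infty}\alpha q(\alpha)\le\log(C_1/c_0)<\infty$: boundedness of $e^{q(\alpha)\alpha/2}$ does not force $\alpha q(\alpha)\to0$, and since the comparison constants are not $1$, no refinement of this one-cylinder bookkeeping alone can produce the limit $0$. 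What is needed (and what the paper uses) is the exact identity $q(\alpha)\alpha=-P(-q(\alpha)\tau-b(\alpha)\log|F'|)$ together with a lower bound for this pressure tending to $0$ as $(q(\alpha),b(\alpha))\to(0,b^*)$, where $P(-b^*\log|F'|)=0$ by (F5). Concretely, by the approximation property (\Cref{lemma: approximation property of pressure}) choose, for any $\eps>0$, a compactly supported $\nu$ with $h(\nu)-b^*\lambda(\nu)>-\eps$ and $\int\tau\,d\nu<\infty$; since $b(\alpha)\le b^*$ and $\log|F'|>0$,
\begin{equation*}
-q(\alpha)\alpha=P\bigl(-q(\alpha)\tau-b(\alpha)\log|F'|\bigr)\ \ge\ h(\nu)-b^*\lambda(\nu)-q(\alpha)\int\tau\,d\nu\ \ge\ -\eps-q(\alpha)\int\tau\,d\nu,
\end{equation*}
and letting $\alpha\to\infty$ (using the already established $q(\alpha)\to0$) gives $\limsup_{\alpha\to\infty}\alpha q(\alpha)\le\eps$; as $\alpha q(\alpha)>0$ and $\eps$ is arbitrary, the limit is $0$. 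This pressure-approximation step is the missing ingredient in your proposal.
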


\begin{proof}
  To show the first equality, let $q_{\infty}:=\limsup_{\alpha\to\infty}q(\alpha)$ and assume by contradiction that $q_{\infty}\in(0,\infty)$. Then, there exists $\{\alpha_n\}_n$ strictly increasing such that $\limsup_{n\to\infty} \alpha_n=\infty$ and $\lim_{n\to\infty} q(\alpha_n)=q_\infty>0$. By \Cref{lemma: root and partial derivative}, \Cref{lemma: q b are analytic} and the continuity of the pressure function on $\mathcal S$, for all $\alpha>\underline\alpha$, we have that
    $$-q(\alpha)\alpha=P(-q(\alpha)\tau-b(\alpha)\log|F'|)\in\R.$$
     By continuity,
    $$\lim_{n \to \infty} P(-q(\alpha_n)\tau-b(\alpha_n)\log|F'|)=P(-q_\infty\tau-b^*\log|F'|)$$
     which is finite. However,  $\lim_{n\to\infty}-\alpha_nq(\alpha_n)=-\infty$, which is a contradiction.

    Similarly, suppose $q_\infty=\infty$: as in the proof of \Cref{lemma: inf and divergence of pressure}, there exists a periodic point $\tilde x$ of period $p$ such that for all $n$ large enough, $\frac1p\sum_{j=0}^{p-1}\tau(F^j \tilde x)<\alpha_n$ \ie $\lim_{n\to\infty}\alpha_n-\frac1p\sum_{j=0}^{p-1}\tau(F^j\tilde x)=\infty$. Denote by $\tilde\nu$ the probability measure supported on the orbit of $\tilde x$.     
     Since $b(\alpha_n)$ is bounded from above for all $n \in \N$, hence by the definition of $q(\alpha_n)$ and \Cref{lemma: root and partial derivative}, we have that
         \[0=\lim_{n\to\infty}p(\alpha_n,q(\alpha_n),b(\alpha_n))\ge   \lim_{n\to\infty}\left(  \int q(\alpha_n)(\alpha_n-\tau) \, d\tilde\nu-b(\alpha_n)\log|(F^p)'(\tilde x)| \right)=\infty,\]
    which is a contradiction. Therefore,  $\lim_{\alpha \to \infty}q(\alpha)=0$.

    Lastly, by continuity, as we have shown that $q(\alpha)\to0$, by definition of $b^*$ \eqref{eqn: def of b*},
    \[\lim_{\alpha\to\infty}q(\alpha)\alpha=-\lim_{\alpha\to\infty}p(-q(\alpha)\tau-b(\alpha)\log|F'|)=P(-b^*\log|F'|)\le0.\]
     But since both $q(\alpha)$ and $\alpha$ are strictly positive, $\lim_{\alpha\to\infty}\alpha q(\alpha)=0$.
    \end{proof}

\begin{lemma}\label{lemma: gap is comparable to integral of q(t)}
    There exists $L>0$ such that for all $\alpha>L$,
    \begin{equation}\label{eqn: b*-b asymptotics}
     {b^*-b(\alpha)}\asymp{\int_\alpha^\infty q(t)dt}.
    \end{equation}
\end{lemma}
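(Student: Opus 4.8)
The plan is to combine the derivative formula \eqref{eqn: b' alpha} with a two-sided bound on the Lyapunov exponents $\lambda(\mu_\alpha)$, where $\mu_\alpha$ is the Gibbs equilibrium state for $q(\alpha)(\alpha-\tau)-b(\alpha)\log|F'|$. By Proposition~\ref{lemma: q b are analytic} the function $b$ is real analytic, strictly increasing, $b(\alpha)\to b^*$ as $\alpha\to\infty$, and $b'(\alpha)=q(\alpha)/\lambda(\mu_\alpha)$; integrating from $\alpha$ to $\infty$,
\[
b^*-b(\alpha)=\int_\alpha^\infty b'(t)\,dt=\int_\alpha^\infty\frac{q(t)}{\lambda(\mu_t)}\,dt .
\]
So it suffices to prove $\lambda(\mu_t)\asymp1$ for all $t$ large enough; the finiteness of $\int_\alpha^\infty q$ is then automatic. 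The lower bound is free: by uniform expansion (F2), $\log|F'|\ge\log A>0$ on $\Lambda$, hence $\lambda(\mu_t)=\int\log|F'|\,d\mu_t\ge\log A$, which already yields $b^*-b(\alpha)\le(\log A)^{-1}\int_\alpha^\infty q(t)\,dt$, i.e.\ the ``$\lesssim$'' half of \eqref{eqn: b*-b asymptotics}.

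The content of the lemma, and the step I expect to be the main obstacle, is the uniform upper bound $\sup_{t\ge L}\lambda(\mu_t)<\infty$: since $\int\tau\,d\mu_t=t\to\infty$ (Lemma~\ref{lemma: root and partial derivative}), the measures $\mu_t$ could a priori put a growing fraction of their mass on cylinders where $|F'|$ is very large, and ruling this out is exactly what the tail hypotheses \hyperref[H1]{(H1)}--\hyperref[H3]{(H3)} provide. I would use the Gibbs property. Since $q(t),b(t)$ are bounded (Propositions~\ref{prop: alpha q alpha goes to 0} and \ref{lemma: q b are analytic}) and $\tau\circ\pi,\log|F'|\circ\pi$ have summable variations with finite $1$-variation, the family $\{\mu_t\}_{t\ge L}$ consists of Gibbs measures with a common constant, and since $p(t,q(t),b(t))=0$ (Lemma~\ref{lemma: root and partial derivative}), writing $|F'|_{[a]}:=\inf_{x\in\pi[a]}|F'(x)|$ we get, uniformly in $t\ge L$,
\[
\mu_t([a])\asymp e^{q(t)t}e^{-q(t)\inf\tau_a}|F'|_{[a]}^{-b(t)},\qquad e^{q(t)t}\asymp1,
\]
the second relation by Proposition~\ref{prop: alpha q alpha goes to 0}. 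Using $\text{var}_1(\log|F'|\circ\pi)<\infty$ and $\log|F'|_{[a]}\ge\log A$,
\[
\lambda(\mu_t)=\sum_{a\in\mathcal A}\int_{\pi[a]}\log|F'|\,d\mu_t\asymp\sum_{a\in\mathcal A}\bigl(\log|F'|_{[a]}\bigr)e^{-q(t)\inf\tau_a}|F'|_{[a]}^{-b(t)},
\]
so it is enough to bound this series uniformly in $t$.

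To close the estimate, fix $\delta_0>0$ small. For $t$ large we have $b^*-b(t)\le\delta_0$, and since $\log y\le\delta_0^{-1}y^{\delta_0}$ for $y\ge1$,
\[
\bigl(\log|F'|_{[a]}\bigr)|F'|_{[a]}^{-b(t)}=\bigl(\log|F'|_{[a]}\bigr)|F'|_{[a]}^{b^*-b(t)}|F'|_{[a]}^{-b^*}\le\delta_0^{-1}|F'|_{[a]}^{-(b^*-2\delta_0)},
\]
so the series is $\lesssim\sum_{a\in\mathcal A}e^{-q(t)\inf\tau_a}|F'|_{[a]}^{-(b^*-2\delta_0)}$. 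If $t^*<b^*$ (which holds in all our examples), then choosing $\delta_0<(b^*-t^*)/2$ makes $\sum_a|F'|_{[a]}^{-(b^*-2\delta_0)}<\infty$ by Lemma~\ref{lemma: MauUrb}, and bounding $e^{-q(t)\inf\tau_a}\le1$ finishes it, uniformly in $t$. In general one keeps $e^{-q(t)\inf\tau_a}\le1$, groups cylinders into the level sets $E_n:=\{a:\inf\tau_a\in[\omega(n),\omega(n+1))\}$ (on which $\inf\tau_a\asymp\omega(n)$, as $\omega(n+1)\le c\,\omega(n)$), and estimates $\sum_{a\in E_n}|F'|_{[a]}^{-(b^*-2\delta_0)}$ via the Gibbs property of $\mu$ (giving $\mu([a])\asymp|F'|_{[a]}^{-b^*}$) together with \hyperref[H1]{(H1)} and \hyperref[H2]{(H2)}; for $\delta_0$ small the resulting series in $n$ is summable with sum bounded independently of $t$ (the factor $e^{-q(t)\omega(n)}\le1$ only helping). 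Either way $\lambda(\mu_t)\asymp1$ for $t\ge L$, and substituting into the displayed identity gives \eqref{eqn: b*-b asymptotics}. The one genuinely delicate point throughout is the uniformity in $t$ of the Gibbs and tail estimates as $q(t)\to0$ and $b(t)\to b^*$, which is precisely the role of \hyperref[H1]{(H1)}--\hyperref[H3]{(H3)}.
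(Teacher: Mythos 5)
Your overall skeleton is the same as the paper's: integrate $b'(\alpha)=q(\alpha)/\lambda(\mu_\alpha)$ from \eqref{eqn: b' alpha} to get $b^*-b(\alpha)=\int_\alpha^\infty q(t)/\lambda(\mu_t)\,dt$, use uniform expansion for the lower bound $\lambda(\mu_t)\ge\log A$ (the easy half), and reduce everything to a uniform upper bound on $\lambda(\mu_t)$; the Gibbs reduction $\lambda(\mu_t)\asymp\sum_a(\log|F'|_{[a]})e^{-q(t)\inf\tau_a}|F'|_{[a]}^{-b(t)}$ and the trick $\log y\le y^{\eta}/\eta$ are also exactly what the paper does. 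The gap is in how you then sum the series. Your first route assumes $\sum_a|F'|_{[a]}^{-(b^*-2\delta_0)}<\infty$, i.e.\ $P(-(b^*-2\delta_0)\log|F'|)<\infty$; this is not among the standing hypotheses, and \eqref{eqn: def of b*} in Lemma~\ref{lemma: parameter range of finite pressure} asserts precisely that $P(-b\log|F'|)=\infty$ for every $b<b^*$, so within the paper's framework this branch cannot be used to prove the lemma in the stated generality, whatever happens in particular examples.

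Your fallback ``general'' argument, which uses only \hyperref[H1]{(H1)}, \hyperref[H2]{(H2)} and the Gibbs property of $\mu$, does not close. Writing $\theta=1-2\delta_0/b^*$ and $E_n=\{a:\inf\tau_a\in[\omega(n),\omega(n+1))\}$, the best these hypotheses give is $\sum_{a\in E_n}|F'|_{[a]}^{-(b^*-2\delta_0)}\asymp\sum_{a\in E_n}\mu([a])^{\theta}\le(\#E_n)^{1-\theta}\mu(E_n)^{\theta}$, because within $E_n$ the individual cylinder measures can be wildly different, so nothing forces the extra factor $|F'|_{[a]}^{2\delta_0}$ to be polynomial in $\omega(n)$; and \hyperref[H1]{(H1)} only bounds $\#E_n$ by $C_\eps e^{\eps\omega(n)}$. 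You are then left with a factor $e^{2\eps\delta_0\omega(n)/b^*}$ that must be absorbed by $e^{-q(t)\omega(n)}$; since $q(t)\to0$ (Proposition~\ref{prop: alpha q alpha goes to 0}), no fixed choice of $\eps,\delta_0$ works for all large $t$, and letting $\eps$ or $\delta_0$ depend on $t$ destroys the uniformity of the implied constants (nothing controls $C_\eps$ as $\eps\to0$). This is exactly the point where the paper invokes \hyperref[H3]{(H3)}: applying the Gibbs property to the perturbed state $\mu_{q(\alpha),b(\alpha)-\eta}$ (legitimate since $q(\alpha)>0$) and then \eqref{eqn: perturbations of acim} converts $\sum_{a\in E_n}e^{-q(\alpha)\tau_a}|F'|_{[a]}^{-(b(\alpha)-\eta)}$ into $e^{-q(\alpha)\omega(n)}\omega(n+1)^{\beta_2(b^*-b(\alpha)+\eta)}\mu(E_n)$, i.e.\ only a polynomial loss in $\omega(n)$, which combined with \hyperref[H2]{(H2)} yields a series summable uniformly in $\alpha$. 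So you have located the genuine difficulty (uniformity of the upper bound on $\lambda(\mu_t)$ as $q(t)\to0$ and $b(t)\to b^*$) but not resolved it; the missing ingredient is the quantitative comparison \hyperref[H3]{(H3)} between $\mu$ and the perturbed Gibbs states, which is what the paper's proof uses at this step.
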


\begin{proof}

For all $\alpha>\underline\alpha$, as $b^*=\lim_{\alpha\to\infty}b(\alpha)$, it is easy to deduce from \eqref{eqn: b' alpha} that $b^*-b(\alpha)=\int_\alpha^\infty q(t)/\lambda(\mu_t)dt$. 
 Since $F$ is uniformly expanding the Lyapunov exponent $\lambda(\mu_\alpha)$ is uniformly bounded from below, hence it suffices now to find a uniform upper bound for $\lambda(\mu_\alpha)$.

Denote by $C_{\alpha}$ the Gibbs constant for the measure $\mu_{\alpha}$; it is well-known that $C_\alpha$ only depends on the variations of the potential $-q(\alpha)\tau-b(\alpha)\log|F'|$, as $\tau$ is of summable variations and $\text{var}_j\left(b(\alpha)\log|F'|\right)\le \text{var}_j(\log|F'|)$ for all $j\in\N$ so as $\alpha\to\infty$ the constants $C_\alpha$ are uniformly bounded. 

By \Cref{lemma: root and partial derivative} $p(\alpha,q(\alpha),b(\alpha))=0$, so the Gibbs property gives $\mu_\alpha([a])\asymp e^{q(\alpha)(\alpha-\tau_a)-b(\alpha)\log|F_a'|}$ for all $a\in\mathcal A$. Also we have $b(\alpha)\to b^*$ as $\alpha\to\infty$, so there exists $L>0$ such that for all $\alpha>L$, $|b^*-b(\alpha)|<\frac{\beta}{8\beta_2}$. Since for all $y>1$ there is $\log y\le y^\eta/\eta$, applying \hyperref[H3]{(H3)}, for all $0<\eta<\min\left\{\frac{b^*}2,\frac{\beta}{8 \beta_2},\frac{\beta}8\right\}$, as $\omega(n+1)\le c\omega(n)$ and $\ell(\omega(n))\lesssim\omega(n)^{\eta}$,
\begin{align*}
    \int\log|F'|d\mu_{\alpha}&\asymp e^{\alpha q(\alpha)}\sum_{a\in\mathcal A}\log|F'_a|\mu_{q(\alpha),b(\alpha)}([a])\lesssim\sum_{a\in\mathcal A}e^{-q(\alpha)\tau_a-(b(\alpha)-\eta)\log|F'_a|}=\sum_{a\in\mathcal A}\mu_{q(\alpha),b(\alpha)-\eta}([a])\\
    &\lesssim\sum_{n\in\N}e^{-q(\alpha)\omega(n)}\omega(n+1)^{\beta_2(b^*-b(\alpha)+\eta)}\frac{\ell(\omega(n))\omega'(n)}{\omega(n)^{\beta+1}}\lesssim \sum_{n\in\N}(c\,\omega(n))^{\beta/4}\frac{\omega'(n)}{\omega(n)^{1+\beta-2\eta}}\\
    &\lesssim \sum_{n\in\N}\frac{\omega(n)'}{\omega(n)^{1+\beta-\beta/2}}
    \lesssim\int_1^\infty\frac{\omega(x)}{\omega(x)^{1+\beta/2}}=-\left. \frac2\beta \omega(x)^{-\beta/2}\right|^\infty_0=\frac2\beta\omega(0)^{-\frac\beta2}<\infty.
\end{align*}
Hence $\int\log|F'|d\mu_\alpha$ is uniformly bounded from above.\qedhere

\end{proof}

\begin{proposition}\label{prop: asymptotics of alpha over q-alpha}
For every $\eps>0$ small enough we have that
   $$q(\alpha)^{-(1-\beta-\eps)}\lesssim \alpha\lesssim q(\alpha)^{-(1-\beta+\eps)}.$$
\end{proposition}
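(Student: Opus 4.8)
The strategy is to squeeze $\alpha$ between powers of $q(\alpha)$ by using the relation $-q(\alpha)\alpha = p(\alpha, q(\alpha), b(\alpha)) = 0$ together with the Gibbs property of $\mu_\alpha$ and the tail estimates. Since $p(\alpha,q(\alpha),b(\alpha))=0$ by Lemma~\ref{lemma: root and partial derivative}, the Gibbs property gives $\mu_\alpha([a]) \asymp e^{q(\alpha)(\alpha-\tau_a) - b(\alpha)\log|F_a'|}$ with uniform constants (as established in Lemma~\ref{lemma: gap is comparable to integral of q(t)}). Summing over $a$ and using that $\sum_a \mu_\alpha([a]) = 1$, I want to extract a relation between $q(\alpha)$, $\alpha$, and $b^*-b(\alpha)$. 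Grouping the cylinders by the level sets $\{a : \inf\tau_a \in [\omega(n),\omega(n+1))\}$ and invoking \hyperref[H3]{(H3)} to compare $\mu_\alpha$-mass of each such block to the $\mu$-mass (which by \hyperref[H2]{(H2)} is $\ell(\omega(n))\omega'(n)/\omega(n)^{\beta+1}$), the normalization $\sum_a\mu_\alpha([a])=1$ becomes, up to constants and $\omega(n)^{\pm\epsilon}$ factors,
\[
1 \asymp e^{-q(\alpha)\alpha} \sum_{n\in\N} e^{q(\alpha)\omega(n)}\,\omega(n)^{\pm\beta_i(b^*-b(\alpha))}\,\frac{\omega'(n)}{\omega(n)^{\beta+1}} \asymp \sum_{n\in\N} e^{q(\alpha)\omega(n)}\,\frac{\omega'(n)}{\omega(n)^{\beta+1\mp\epsilon}},
\]
where in the last step I absorb the $(b^*-b(\alpha))$-exponent into the $\epsilon$-error since $b(\alpha)\to b^*$ (and $e^{-q(\alpha)\alpha}=1$). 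Approximating the sum by the integral $\int_1^\infty e^{q(\alpha)\omega(x)}\omega'(x)\omega(x)^{-\beta-1\pm\epsilon}\,dx$ and substituting $u = q(\alpha)\omega(x)$ yields $\int \approx q(\alpha)^{\beta\mp\epsilon}\int_{q(\alpha)\omega(1)}^\infty e^{u} u^{-\beta-1\pm\epsilon}\,du$; since $q(\alpha)\to 0$ this lower limit goes to $0$, and the integral $\int_0^\infty e^u u^{-\beta-1}du$ \emph{diverges} at infinity --- so this naive computation needs the exponential to be cut off. The resolution is that $\tau$ is finite so there is an effective largest scale, or rather: one must use a different test instead of the full normalization sum.

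Consequently, the cleaner route is to use $\int\tau\,d\mu_\alpha = \alpha$ (from Lemma~\ref{lemma: root and partial derivative}), i.e.
\[
\alpha = \int\tau\,d\mu_\alpha \asymp \sum_{n\in\N}\omega(n)\,\mu_\alpha\bigl(\{a:\inf\tau_a\in[\omega(n),\omega(n+1))\}\bigr),
\]
and to estimate the right-hand side using \hyperref[H3]{(H3)}, \hyperref[H2]{(H2)} and the Gibbs property of $\mu_\alpha$ exactly as in the displayed computation in the proof of Lemma~\ref{lemma: gap is comparable to integral of q(t)}, now keeping the factor $e^{q(\alpha)\omega(n)}$. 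After absorbing $\ell(\omega(n))$ and the $\omega(n)^{\pm\beta_i(b^*-b(\alpha))}$ terms into $\omega(n)^{\pm\epsilon}$ (legitimate since $q(\alpha)\to0$ forces $b^*-b(\alpha)\to0$, and $\omega$-values that matter are large), one gets
\[
\alpha \asymp \sum_{n\in\N} e^{q(\alpha)\omega(n)}\,\frac{\omega'(n)}{\omega(n)^{\beta\pm\epsilon}} \asymp \int_1^\infty e^{q(\alpha)\omega(x)}\,\frac{\omega'(x)}{\omega(x)^{\beta\pm\epsilon}}\,dx.
\]
Substituting $u=q(\alpha)\omega(x)$ turns this into $q(\alpha)^{\beta-1\pm\epsilon}\int_{q(\alpha)\omega(1)}^{\infty} e^{u}u^{-\beta\pm\epsilon}\,du$; but now the integrand $e^u u^{-\beta}$ still blows up at $u=\infty$, which again signals that one must exploit that $\tau$ cannot take arbitrarily large values relative to $q(\alpha)$ --- concretely, the finiteness $p(\alpha,q(\alpha),b(\alpha))=0<\infty$ combined with Lemma~\ref{lemma: MauUrb} forces $e^{q(\alpha)\omega(n)}$ to be summable against $\mu([a])$-type weights, meaning the relevant $n$ range is $\omega(n)\lesssim 1/q(\alpha)$ up to lower-order factors. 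Restricting the sum/integral to $u\lesssim 1$ (the tail $u\gtrsim1$ being controlled by the same summability), the integral $\int_{q(\alpha)\omega(1)}^{C} e^u u^{-\beta\pm\epsilon}\,du \asymp \int_{q(\alpha)\omega(1)}^{C} u^{-\beta\pm\epsilon}\,du \asymp 1$ when $\beta<1$ (the integral converges at $0$), so that $\alpha \asymp q(\alpha)^{\beta-1\pm\epsilon} = q(\alpha)^{-(1-\beta)\pm\epsilon}$, which is exactly the claim with $\epsilon$ renamed.

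\textbf{The main obstacle} is making rigorous the cutoff at scale $\omega(n)\sim 1/q(\alpha)$: one must show both that the contribution of scales $\omega(n) \gg 1/q(\alpha)$ to $\int\tau\,d\mu_\alpha$ is negligible (which follows because $\mu_\alpha$ is a probability measure and the Gibbs/\hyperref[H3]{(H3)} estimates force $e^{q(\alpha)\omega(n)}\mu([a])$-weighted sums over those scales to be small --- essentially the finiteness of the pressure and the normalization $\sum\mu_\alpha([a])=1$), and that the contribution of scales $\omega(n) \asymp 1/q(\alpha)$ is comparable to the full integral (lower bound via a single well-chosen block of cylinders). A secondary technical point is justifying the replacement of the slowly-varying factor $\ell$ and the $(b^*-b(\alpha))$-power by $\omega(n)^{\pm\epsilon}$ uniformly in the relevant range --- this uses the remark after Definition~\ref{def: slowly varying func} together with $b(\alpha)\to b^*$ and the fact that only large values of $\omega(n)$ contribute, and that for $\alpha$ large enough $b^*-b(\alpha)$ is as small as we like by Proposition~\ref{lemma: q b are analytic}. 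Both the upper and lower bounds then come from comparing the resulting sum to $\int_{q(\alpha)\omega(1)}^{O(1)} u^{-\beta\pm\epsilon}\,du$, which is bounded above and below by positive constants precisely because $\beta\in(0,1)$.
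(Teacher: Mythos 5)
Your second route --- using $\int\tau\,d\mu_\alpha=\alpha$, decomposing over the blocks $\{a:\inf\tau_a\in[\omega(n),\omega(n+1))\}$, comparing $\mu_\alpha$ with $\mu$ via \hyperref[H2]{(H2)}--\hyperref[H3]{(H3)}, passing to an integral and substituting $s=q(\alpha)\omega(x)$ --- is exactly the paper's strategy, but you apply \hyperref[H3]{(H3)} with the exponential tilt reversed, and this sign error manufactures the ``main obstacle'' you then try to patch. \hyperref[H3]{(H3)} bounds the ratio $\mu/\mu_{q,b}$ \emph{below} by $e^{q\omega(n)}$ (up to polynomial factors), i.e.\ $\mu_{q(\alpha),b(\alpha)}\left(\{a:\inf\tau_a\in[\omega(n),\omega(n+1))\}\right)\lesssim e^{-q(\alpha)\omega(n)}\,\omega(n+1)^{\beta_2(b^*-b(\alpha))}\,\mu\left(\{a:\inf\tau_a\in[\omega(n),\omega(n+1))\}\right)$; equivalently, since $p(\alpha,q(\alpha),b(\alpha))=0$, the Gibbs property gives $\mu_\alpha([a])\asymp e^{q(\alpha)\alpha}\,e^{-q(\alpha)\tau_a}\,|F'_a|^{-b(\alpha)}$, a \emph{decaying} exponential in $\tau_a$ (the factor $e^{q(\alpha)\alpha}$ is harmless because $\alpha q(\alpha)\to0$ by Proposition~\ref{prop: alpha q alpha goes to 0}). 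With the correct sign the relevant sums are $\sum_n e^{-q(\alpha)\omega(n)}\omega'(n)\omega(n)^{-(\beta\mp\eps)}$, and the substitution $s=q(\alpha)\omega(x)$ yields $q(\alpha)^{-(1-\beta\pm\eps)}\Gamma(1-\beta\pm\eps)$, which converges at infinity because of $e^{-s}$ and at $0$ because $\beta\in(0,1)$. No cutoff at $\omega(n)\sim 1/q(\alpha)$ is needed: the divergence at $u=\infty$ you are fighting is an artifact of the inverted exponent.

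Moreover, the patch you propose would not survive scrutiny even on its own terms: with a genuinely growing weight, $\sum_n e^{q(\alpha)\omega(n)}\mu\left(\{a:\inf\tau_a\in[\omega(n),\omega(n+1))\}\right)$ diverges outright (the $\mu$-masses decay only polynomially), so it can neither equal a probability nor be truncated consistently; and Lemma~\ref{lemma: MauUrb} together with finiteness of the pressure controls sums of the form $\sum_a e^{-q\inf\tau_a}\sup_{[a]}|F'|^{-b}$, i.e.\ sums with the decaying tilt, not sums weighted by $e^{+q\tau}$ --- so the claim that the tail $u\gtrsim1$ is ``controlled by the same summability'' has no support. Once the sign is corrected, your argument collapses to the paper's proof: the upper bound for $\alpha$ comes from the left-hand inequality in \eqref{eqn: perturbations of acim} together with $\omega(n+1)\le c\,\omega(n)$, the lower bound from the right-hand inequality, and the remaining bookkeeping (absorbing $\ell(\omega(n))$ and the $\omega(n)^{\beta_i(b^*-b(\alpha))}$ factors into $\omega(n)^{\pm\eps}$ using $b(\alpha)\to b^*$) is as you describe and needs no single-block construction for the lower bound.
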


\begin{proof}
As in the proof of \Cref{lemma: pressure is non-negative for correct b}, \Cref{lemma: root and partial derivative} and \Cref{lemma: gap is comparable to integral of q(t)} above, for each $\alpha>\underline\alpha$ there exists a unique Gibbs equilibrium state $\mu_\alpha$ for $q(\alpha)(\alpha -\tau) -b(\alpha) \log|F'|$ and $\int\tau\,d\mu_\alpha=\alpha$. 
 By assumption there is $c>0$ such that $\frac{\omega(x+1)}{\omega(x)}\le c$.
  For arbitrary $\eps>0$ there is $L_\eps\ge L$ such that for all $\alpha>L_\eps$, $b^*-b(\alpha)<\min\left\{\frac{\eps}{2\beta_1},\frac{\eps}{2\beta_2}\right\}$,  by the Gibbs property, the regularity of $\tau$ (especially finite 1-variation) assumption \hyperref[H2]{(H2)} and \hyperref[H3]{(H3)}, since $\ell(\omega(n))\lesssim \omega(n)^{\eps/2}$, 
\begin{align*}
    \alpha&=\sum_{a\in\mathcal A}\int_{[a]}\tau d\mu_{\alpha}\lesssim\sum_{n\in\N}\omega(n+1)\sum_{\{a:\inf\tau_a\in[\omega(n),\omega(n+1))\}}\mu_\alpha([a])\\
    &\lesssim \sum_{n\in\N}c\omega(n)e^{\alpha q(\alpha)}(c\,\omega(n))^{\beta_2(b^*-b(\alpha))} e^{-q(\alpha) \omega(n)}\frac{\ell(\omega(n))\omega'(n)}{\omega(n)^{\beta+1}}\\
    &\lesssim e^{\alpha q(\alpha)}\sum_{n\in\N}e^{-q(\alpha)\omega(n)}\omega'(n)\omega(n)^{-(\beta-\eps)}\asymp e^{\alpha q(\alpha)}\int_0^\infty e^{-q(\alpha)\omega(x)}\omega'(x)\omega(x)^{-(\beta-\eps)}dx
\end{align*}

Then substituting $s=q(\alpha)\omega(x)$ and recalling, by \Cref{prop: alpha q alpha goes to 0}, that $\alpha q(\alpha)\to0$ as $\alpha\to\infty$, we obtain that for all $\eps>0$ and $\alpha$ large enough,
\begin{equation}
    \alpha\lesssim e^{\alpha q(\alpha)}\int_0^\infty\frac1{q(\alpha)}e^{-s}\left(\frac s{q(\alpha)}\right)^{-(\beta-\eps)}\asymp q(\alpha)^{-(1-\beta+\eps)}\Gamma(1-\beta+\eps)
\end{equation}
where $\Gamma(\cdot)$ denotes the usual Gamma function, $\Gamma(r)=\int_0^\infty x^{r-1}e^{-x}dx$. Since $\beta\in(0,1)$, $\Gamma(1-\beta+\eps)$ is bounded away from infinity hence $\alpha\lesssim q(\alpha)^{-(1-\beta+\eps)}$.

Now we prove the asymptotic lower bound. Using \hyperref[H3]{(H3)} and the substitution $s=c\,q(\alpha)\omega(x)$,
\begin{align*}
    \alpha&\gtrsim\sum_{n\in\N}\omega(n)\mu_\alpha([a])\gtrsim \sum_{n\in\N}e^{-q(\alpha)\omega(n+1)}\ell(\omega(n))^{-1}\omega(n)^{1+\beta_1(b^*-b(\alpha))}\omega'(n)\omega(n)^{-(1+\beta)}\\
    &\gtrsim\sum_{n\in\N}e^{-q(\alpha)c\omega(n)}\omega'(n)\omega(n)^{-(\beta+\eps)}\asymp \int_0^\infty\frac1{c\,q(\alpha)}e^{-s}\left(\frac s{c\,q(\alpha)}\right)^{-(\beta+\eps)}ds\\
    &\asymp q(\alpha)^{-(1-\beta-\eps)}\Gamma(1-\beta-\eps).
\end{align*}

For all $\eps<\frac{1-\beta}{2}$, $\Gamma(1-\beta-\eps)$ is uniformly bounded so we get $\alpha \gtrsim q(\alpha)^{-(1-\beta-\eps)}$.
\end{proof}

\begin{proof}[Proof of \Cref{thm: main theorem}]
Let $\eps>0$, by \Cref{lemma: gap is comparable to integral of q(t)}, \Cref{prop: asymptotics of alpha over q-alpha} for all $\alpha\in(L,\infty)$ we have the following asymptotic inequality:
\begin{equation}\label{ineqn: upper bound of convergence}
b^*-b(\alpha)\asymp\int_{\alpha}^\infty q(t)\ dt\lesssim \int_{\alpha}^\infty t^{-\frac1{1-\beta+\eps}} \ dt\asymp \alpha^{-\frac{\beta-\eps}{1-\beta+\eps}}.
\end{equation}

As the above asymptotic inequality holds for all $\eps>0$ small, we obtain that  if $x<\frac{\beta}{1-\beta}$ then, 
 \[\lim_{\alpha\to\infty}\left(b^*-b(\alpha)\right)\alpha^x=0.\]

Similarly, $$b^*-b(\alpha)\gtrsim\int_\alpha^\infty t^{-\frac1{1-\beta-\eps}}\,dt=\alpha^{-\frac{\beta-\eps}{1-\beta-\eps}},$$ 
so if $x>\frac{\beta}{1-\beta}$ then $\lim_{\alpha\to\infty}\left(b^*-b(\alpha)\right)\alpha^x=\infty$.
\end{proof}

\section{Examples} \label{sec:ex}
In this section we provide examples that illustrate our results.  Our primary example is the induced system of the  non-uniform hyperbolic interval map defined by Pomeau and Manneville.

\begin{example}[Manneville-Pomeau map]\label{ex1}
Let $\lambda>1$ and define the \emph{Manneville-Pomeau map} $f_\lambda:[0,1]\to[0,1]$ by
\begin{equation*}
    f_\lambda(x)=\left\{\begin{aligned}
       &x(1+2^\lambda x^\lambda) &x\in[0,1/2),\\
       &2x-1&x\in[1/2,1].
    \end{aligned}\right.
\end{equation*}
Let $\tau:(1/2, 1]\to\N$ be the first return time map to $[1/2, 1]$, $\tau(x):=\inf\left\{j\ge1:f^j(x)\in [1/2,1]\right\}$ and let $F:(1/2, 1] \to (1/2, 1]$ be the map  defined by $F=f^{\tau}$.  The map $F$ has an invariant probability measure that is absolutely continuous with respect to the Lebesgue measure and that we denote by $\mu$. It is well known that the map $F$, together with  $\mu$,  satisfies (F1)--(F5) and $b^*=1$. Consider the observable defined by $\tau$. Clearly $\text{var}_j(\tau\circ\pi)=0$ for all $j\ge1$, and the relevant $\omega(\cdot)$ here is $\omega(n)=n$, and \hyperref[H1]{(H1)} holds since  $\#\left\{\tau=n\right\}=1$.  By, for example the proof of \cite[Proposition 2]{BruTod08}, we have that $\mu(\tau=n)= \ell(n)n^{-(1+\frac1\lambda)}$ (note that the calculation referenced gives $\ell(n)$ of the form $1+c\log n/n$ which is uniformly bounded), then condition  \hyperref[H2]{(H2)} also holds for $\beta=1/\lambda$ . 
Condition \hyperref[H3]{(H3)} follows from \Cref{rmk:Cconds} and \Cref{lemma: H3 by uniform branch count}, and is easily verifiable with $\beta_{1,2}=\beta+1$.
Therefore by Theorem~\ref{thm: main theorem},
\begin{equation*}
    \lim_{\alpha\to\infty} \left(1-\dim_H J(\alpha) \right)\alpha^x=
 \begin{cases}
       0 &  \text{ if }  x>\frac{1}{\lambda-1};\\
       \infty & \text{ if } x<\frac1{\lambda-1}.
    \end{cases} 
\end{equation*} 
    \end{example}
\vspace{5mm}

Below in Examples \eqref{ex3}--\eqref{ex3} we construct classes of full-branched maps that satisfies our condition for $F$ and $\tau$ in \S\ref{sec:obs}. For every decreasing sequence of positive numbers $(a_n)_n$ such that $\sum_{i=1}^{\infty} a_n=1$, we can associate a partition $(I_n)_n$ of the interval $[0,1]$ such that the length of $I_n$ is equal to $a_n$. The map   $F: \bigcup I_n \to [0,1]$ such that $F$ restricted to each $I_n$ is linear, increasing and $F(I_n)=[0,1]$ is a map that can be coded with a full shift on a countable alphabet.  Clearly this system satisfies (F1)--(F5) and has $b^*=1$. In all examples below, take $\ell(n)=1$.

 \begin{example}\label{ex3}
Fix $r >0$ and $s\in (0, r)$.  Define $(a_n)_n$  and $(b_n)_n$ to be the sequences defined by
 \begin{equation*}
 a_n= C_s n^{-(1+s)} \quad \text{ and } \quad b_n=n^r,
 \end{equation*} 
where $C_s$ is such that $1=C_s\sum_{n=1}^{\infty} a_n\asymp C_s \int_1^{\infty} x^{-(1+s)}= C_s \frac{1}{s}$.  Again  (F1)--(F5) hold, the Lebesgue measure is the measure of maximal dimension and $b^*=1$.
 
Consider the observable defined by $\tau(x) = b_n$ if $x\in I_n$, let $\omega(x)= x^r$, and since $\int_{n^{1/r}}^\infty x^{-1-s}dx\asymp n^{-s/r}$, we verify \hyperref[H2]{(H2)} with $\beta=s/r$ and hence $\beta=s/r\in (0, 1)$ and by \Cref{lemma: H3 by uniform branch count},  \hyperref[H3]{(H3)} holds. To verify this, we notice that for $b\in(0,1)$,
$$\frac{n^{-(1+s)}}{n^{-b(1+s)}}=n^{-(1+s)(1-b)}=n^{-r\left(\beta+\frac1r\right)(1-b)},$$
so \hyperref[H3]{(H3)} holds with $\beta_{1,2}=\beta+\frac1r$. 
Therefore by Theorem~\ref{thm: main theorem},
\begin{equation*}
    \lim_{\alpha\to\infty} \left(1-\dim_H J(\alpha)\right)\alpha^x=
 \begin{cases}
       0 &  \text{ if }  x>\frac{s}{r-s};\\
       \infty & \text{ if } x<\frac{s}{r-s}.
    \end{cases} 
\end{equation*}  

 \end{example}

\begin{example} \label{ex:exp}
Let $0<c<a$ and $b>0$, set $$a_n = \frac{C}{n^a}, \ c_n = n^c, \ b_n= n^b \text{ such that } a, b, c>0 \text{ and }  \beta= \frac{a-c-1}b\in (0, 1).$$
 Here $C$ is chosen such that $\sum_nCn^{c-a}=1$.  
Consider the partition of $[0,1]$ induced by $(a_n)_n$ and $c_n$, \ie for each $n$ there are $c_n$-number of disjoint intervals of length $a_n$. Define $F$ as in Example \ref{ex3}. Let $\tau :[0,1] \to \R$ defined by $\tau(x)= b_n$ if and only if $x$ belongs to a partition set of length $a_n$. Then $F$ satisfies all the conditions listed in \S\ref{sec:obs} and the measure of maximal dimension is again the Lebesgue measure $\mu$ with $b^*=1$. Let $\omega(x)=x^b$, for all $\eps>0$, $\sum_{n\in\N}n^ce^{-\eps n^b}<\infty$ so \hyperref[H1]{(H1)} holds, and for \hyperref[H2]{(H2)}, $$\mu\{\tau=\omega(n)\}=\frac{C}{n^{a-c}}\asymp\frac{bn^{b-1}}{n^{(1+\beta)b}}.$$
To verify \hyperref[H3]{(H3)}, it suffices to compute that for all $\eta$ close to 1, 
$$\frac{n^{-(a-c)}}{n^cn^{-\eta a}}=n^{-(1-\eta)a}=n^{-b(1-\eta)(\beta+(c+1)/b)},$$
so the relevant $\beta_{1,2}$ here is $\frac ab=\beta+(c+1)/b$ and the conclusion of \Cref{thm: main theorem} holds, \ie
\begin{equation}
    \lim_{\alpha\to\infty} \left(1-\dim_H J(\alpha)\right)\alpha^x=
 \begin{cases}
       0 &  \text{ if }  x>\frac{\beta}{1-\beta};\\
       \infty & \text{ if } x<\frac\beta{1-\beta}.
    \end{cases} 
\end{equation}    
 
  \end{example}

\begin{example} \label{ex:exponential}
    Continue the construction as in Example \ref{ex:exp}, let $b_n=e^n$, the number of branches with $\tau(x)=b_n$ is given by $c_n=2^n$, and the Lebesgue measure of each partition set with $\tau=e^n$ is $a_n=\frac{K}{2^ne^{\beta n}}$ where $K=(\sum_{n\ge1}e^{\beta n})^{-1}$. Then $F$ satisfies all the conditions listed in \S\ref{sec:obs} and the measure of maximal dimension is again the Lebesgue measure $\mu$ with $b^*=1$. Clearly, the obvious choice of function here is $\omega(x)=e^x$. Then $\limsup_{x\to\infty}\frac{\omega(x+1)}{\omega(x)}=e$ and $\sum_{n\in\N}2^ne^{-\eps e^n}<\infty$ for all $\eps>0$, so \hyperref[H1]{(H1)} holds. Next, $$\mu(\{\tau\in[e^n,e^{n+1})\})\asymp\frac1{e^{\beta n}}=\frac{ e^{ n}}{e^{(1+\beta)n}},$$
    so \hyperref[H2]{(H2)} is verified. Lastly for \eqref{eqn: perturbations of acim}, for all $b$ close to 1 and $q>0$, 
    $$\frac{\mu(\tau=\omega(n))}{\mu_{q,b}(\tau=\omega(n))}\asymp e^{qe^n}\frac{e^{-\beta n}}{e^{-\beta bn}e^{\log 2(1-b)n}}\le e^{qe^{n+1}}\exp\left(-n(\beta+\log2)(1-b)\right)= e^{q\omega(n+1)}\omega(n)^{-(\beta+\log2)(1-b)},$$
    and similarly the lower bound 
    $$e^{q\omega(n)}\omega(n+1)^{-(\beta+\log2)(1-b)}\lesssim e^{qe^n}\frac{e^{-\beta n}}{e^{-\beta bn}e^{\log 2(1-b)n}}.$$ So \hyperref[H3]{(H3)} holds with $\beta_{1,2}=\beta+\log 2$, and our main theorem applies.
\end{example}

\begin{example}[L\"uroth expansions]
\label{eg:Lur}
 Every real number $x \in (0,1)$ has \emph{L\"uroth} expansion of the form:
\begin{equation*}
x=\frac{1}{d_1}+ \frac{1}{d_1(d_1-1)d_2} + \cdots + \frac{1}{d_1(d_1-1) \cdots d_{n-1}(d_{n-1}-1)d_n} + \cdots =[d_1 d_2 \dots]_L
\end{equation*}
with $d_i \geq 2$ a positive integer for every $i \in \N$, see \cite{lu}. Let $F:[0,1] \to [0,1]$ be the map  defined by $F(x)= n(n+1)x-n$ if $x \in [(n+1)^{-1}, n^{-1})$ and $F(0)=0$. If $x=[d_1 d_2 \dots]_L$ then $F(x)=[d_2 d_3 \dots]_L$ (see \cite{dk}). The  Lebesgue measure, that we denote by $\mu$, is preserved by $F$. Then the map $F$ together with the Lebesgue measure satisfies (F1)--(F5) and $b^*=1$.  Let $r \in \N$ be such that  $r>1$ and $\tau:[0,1] \to \R$ be defined by $\tau([d_1 d_2 \dots]_L)= d_1^{r}$. Let $\omega(x)= x^r$. Note that condition \hyperref[H1]{(H1)} is clearly satisfied. Also 
\begin{equation*}
\mu\left(\tau\in [\omega(n),\omega(n+1)) \right)=\mu([n]) \asymp \frac{1}{n(n-1)}\sim\frac1{n^2}.
\end{equation*}
Thus, solving $r(\beta+1)-(r-1)=2$, \hyperref[H2]{(H2)} holds with $\beta  =\frac1r\in(0,1)$. Moreover, Lemma \ref{lemma: H3 by uniform branch count} implies that 
 \hyperref[H3]{(H3)} is satisfied.  Therefore, 
\begin{equation*}
    \lim_{\alpha\to\infty} \left(1-\dim_H J(\alpha)\right)\alpha^x=
\begin{cases}
       0 &  \text{ if }  x>\frac{\beta}{1-\beta};\\
       \infty & \text{ if } x<\frac{\beta}{1-\beta}.
\end{cases} 
\end{equation*} 
\end{example}

\begin{example}[Continued fractions]
Every irrational real number $ x \in (0,1)$  can be written uniquely as a continued fraction of the form
\begin{equation*}
x = \textrm{ } \cfrac{1}{a_1 + \cfrac{1}{a_2 + \cfrac{1}{a_3 + \dots}}} = \textrm{ } [a_1 a_2 a_3 \dots],
\end{equation*}
where $a_i \in \mathbb{N}$ (see \cite[Chapter X]{hw}). The Gauss map,   $F :(0,1] \to (0,1]$, is the interval map defined by 
\begin{equation*}
F(x)= \frac{1}{x} -\Big[ \frac{1}{x} \Big], 
\end{equation*}
where $[x]$ denotes the integer part of the real number $x$. Note that for  $x=[a_1 a_2 a_3 \dots ]$ we have that $F(x)=[a_2 a_3 \dots]$ (see \cite[Section 3.2]{ew}). The map $F$ restricted to the irrational numbers together with the Gauss measure
\begin{equation*}
\mu(A)= \frac{1}{\ln 2} \int_A \frac{1}{1+ x} \, dx,
\end{equation*}
satisfies (F1)--(F5) (condition (F2) is satisfied for the second iterate of $F$, which is sufficient here) and $b^*=1$. We have that
\begin{equation*}
\mu([n]) =  \frac{1}{\ln 2}  \ln \left(1 + \frac{1}{n(n+2)}	\right) = \frac{1}{n^2} - \frac{2}{n^3} + O\left(\frac{1}{n^4} \right).
\end{equation*}
where $[n]=\left\{x=[a_1a_2\dots]\in(0,1]:a_1=n\right\}=\left[\frac1n,\frac1{n+1}\right)$.
Ryll-Nardzewski \cite[Corollary 4]{rn} proved that if $p \geq 1$ then for Lebesgue almost every $x =[a_1, a_2, \dots ] \in (0,1)$,
\begin{equation*}
\lim_{n \to \infty} \left(\frac{a_1^p + a_2^p + \dots + a_n^p }{n}		\right)^{\frac{1}{p}} = \infty. 
\end{equation*}
The Birkhoff spectrum for these type of power mean averages was studied in \cite[Section 6]{IomJor15}, with less precise information on the asymptotic behaviour.  
Let $r>1$ and $\tau:[0,1] \to \R$ be defined by $\tau([a_1 a_2 \dots])= a_1^{r}$. We have that $\int \tau d \mu= \infty$. 
Let $\omega(x)= x^r$. Note that condition \hyperref[H1]{(H1)} is clearly satisfied. We also have the following  estimate, 
\begin{equation*}
\mu\left(\tau\in[\omega(n),\omega(n+1))\right) = \mu([n]) \asymp \frac{1}{n^{2}}. 
\end{equation*}
Thus,  \hyperref[H2]{(H2)} holds again with $\beta  =\frac1r$. Moreover, Lemma \ref{lemma: H3 by uniform branch count} implies that  \hyperref[H3]{(H3)} is satisfied.  Therefore, 
\begin{equation*}
    \lim_{\alpha\to\infty} \left(1-\dim_H J(\alpha)\right)\alpha^x=
 \begin{cases}
       0 &  \text{ if }  x>\frac{\beta}{1-\beta};\\
       \infty & \text{ if } x<\frac{\beta}{1-\beta}.
    \end{cases} 
\end{equation*} 
\end{example}


\begin{thebibliography}{XXX}

\bibitem[A]{Ari2} Y. Arima, \emph{Polynomial convergence rate at infinity for the cusp winding spectrum of generalized Schottky groups}. Preprint (2024), \href{https://arxiv.org/abs/2407.12398}{arXiv: 2407.12398}.


\bibitem[BS]{bs}  J. Buzzi, O. Sarig,  \emph{Uniqueness of equilibrium measures for countable Markov shifts and multidimensional piecewise expanding maps} Ergodic Theory Dynam. Systems 23 (2003), no. 5, 1383--1400.


\bibitem[BT]{BruTod08} H. Bruin, M. Todd, \emph{Equilibrium states for interval maps: potentials with $\sup\phi-\inf\phi< h_{top}$,} Comm. Math. Phys. \textbf{283} (3) (2008) 579--611.

\bibitem[DK]{dk}  K. Dajani, Ch. Kalle, \emph{A first course in ergodic theory} Chapman and Hall/CRC, Boca Raton, FL, 2021, xiii+253 pp.


\bibitem[EW]{ew} M. Einsiedler, T.  Ward, \emph{Ergodic theory with a view towards number theory.} Graduate Texts in Mathematics, 259. Springer-Verlag London, Ltd., London, 2011. xviii+481 pp.


\bibitem[HW]{hw} G. Hardy, E. Wright, \emph{An introduction to the theory of
numbers}, fifth edition, Oxford University Press (1979).

\bibitem[I]{Iom05} G. Iommi, \emph{Multifractal analysis for countable Markov shifts}, Ergodic Theory Dynam. Systems \textbf{25} (6) (2005), 1881--1907.


\bibitem[IJ1]{IomJor13} G. Iommi, T. Jordan, \emph{Phase transitions for suspension flows}, Commun. Math. Phys. \textbf{320} (2013), 475–498. 


\bibitem[IJ2]{IomJor15} G. Iommi, T. Jordan, \emph{Multifractal analysis of Birkhoff averages for countable Markov maps}, Ergodic Theory Dynam. Systems, \textbf{35} (8) (2015), 2559--2586.

\bibitem[IJT]{IomJorTod17} G. Iommi, T. Jordan, M. Todd, \emph{Transience and multifractal analysis}, 
Ann. Inst. H. Poincar\'e Anal. Non Lin\'eaire, \textbf{34} (2) (2017), 407--421. 

\bibitem[L]{lu} J. L\"uroth, \emph{Ueber eine eindeutige Entwickelung von Zahlen in eine unendliche Reihe}, Math. Ann. \textbf{21} (1883), 411--423.

\bibitem[MU1]{MauUrb96} R. Mauldin, M. Urba\'nski. \emph{Dimensions and measures in infinite iterated function systems.} Proc. London Math. Soc. (3) 73 (1996), no. 1, 105--154.


\bibitem[MU2]{MauUrb03} R. Mauldin, M. Urba\'nski. \emph{Graph Directed Markov Systems: Geometry and Dynamics of Limit Sets}. Cambridge Tracts in Mathematics., 148 Cambridge University Press, Cambridge, 2003, xii+281 pp.


\bibitem[P]{pes}  Y. Pesin, \emph{Dimension theory in dynamical systems}
 Chicago Lectures in Math. University of Chicago Press, Chicago, IL, 1997, xii+304 pp.



\bibitem[RN]{rn} C. Ryll-Nardzewski, \emph{On the ergodic theorems. II. Ergodic theory of continued fractions.} Studia Math. 12 (1951), 74--79.


\bibitem[S1]{Sar99} O. Sarig, \emph{Thermodynamic formalism for countable Markov shifts}
Ergodic Theory Dynam. Systems \textbf{19} (6) (1999), 1565--1593.

\bibitem[S2]{Sar01} O. Sarig, \emph{Phase transitions for countable Markov shifts}, Commun. Math. Phys. \textbf{217}(3) (2001), 555–577.

\bibitem[S3]{Sar03} O. Sarig, \emph{Existence of Gibbs measures for countable Markov shifts}, Proc. Amer. Math. Soc. \textbf{131} (6) (2003), 1751--1758.




\end{thebibliography}
\end{document}